\definecolor{Gray}{gray}{0.85}
\definecolor{LightCyan}{rgb}{0.88,1,1}
\definecolor{lightblue}{rgb}{0.68, 0.85, 0.9}
\definecolor{lightgreen}{rgb}{0.56, 0.93, 0.56}
\definecolor{lightgray}{rgb}{0.83, 0.83, 0.83}
\definecolor{moccasin}{rgb}{0.98, 0.92, 0.84}
\definecolor{gainsboro}{rgb}{0.86, 0.86, 0.86}
\definecolor{pastelgray}{rgb}{0.81, 0.81, 0.77}
\definecolor{lightmauve}{rgb}{0.86, 0.82, 1.0}
\definecolor{lavendergray}{rgb}{0.77, 0.76, 0.82}
\definecolor{lavenderblue}{rgb}{0.8, 0.8, 1.0}
\newcolumntype{a}{>{\columncolor{Gray}}c}
\newcolumntype{b}{>{\columncolor{white}}c}
\definecolor{navajowhite}{rgb}{1.0, 0.87, 0.68}
\definecolor{languidlavender}{rgb}{0.84, 0.79, 0.87}
\definecolor{gainsboro}{rgb}{0.86, 0.86, 0.86}
\definecolor{celadon}{rgb}{0.67, 0.88, 0.69}
\definecolor{beaublue}{rgb}{0.74, 0.83, 0.9}
\definecolor{paleaqua}{rgb}{0.74, 0.83, 0.9}
\definecolor{pearl}{rgb}{0.94, 0.92, 0.84}
\definecolor{periwinkle}{rgb}{0.8, 0.8, 1.0}
\definecolor{mossgreen}{rgb}{0.68, 0.87, 0.68}
\definecolor{forestgreen}{rgb}{0.13, 0.55, 0.13}
\newtheorem{definition}{Definition}
\newtheorem{assumption}{Assumption}
\newtheorem{proposition}{Proposition}
\newtheorem{thm}{Theorem}
\newtheorem{cor}{Corollary}
\newtheorem{proof}{Proof}
\definecolor{almond}{rgb}{0.94, 0.87, 0.8}
\definecolor{aliceblue}{rgb}{0.94, 0.97, 1.0}
\definecolor{grannysmithapple}{rgb}{0.66, 0.89, 0.63}
\definecolor{white}{rgb}{0.9, 0.9, 0.98}
\journal{European Journal of Control}
\begin{document}

\begin{frontmatter}
\title{ On the probabilistic feasibility of solutions in multi-agent optimization problems under uncertainty}
\author[1]{George Pantazis}\fntext[myfootnote]{Corresponding author}
\ead{georgios.pantazis@lmh.ox.ac.uk}
\author{Filiberto Fele}
\ead{ filiberto.fele@eng.ox.ac.uk} 
\author{Kostas Margellos}
\ead{kostas.margellos@eng.ox.ac.uk}
\address{Department of Engineering Science, University of Oxford, OX1 3PJ, UK}
%
%\corref{George Pantazis}
%\cortext[George Pantazis]{Corresponding author}
%% or include affiliations in footnotes:
%\author[mysecondaryaddress]{Global Customer Service\corref{mycorrespondingauthor}}

%\address[mymainaddress]{1600 John F Kennedy Boulevard, Philadelphia}
%\address[mysecondaryaddress]{360 Park Avenue South, New York}

\begin{abstract}
We investigate the probabilistic feasibility of randomized solutions to two distinct classes of uncertain multi-agent optimization programs.  We first assume that only the constraints of the program are affected by uncertainty, while the cost function is arbitrary. Leveraging recent \emph{a posteriori} developments of the scenario approach, we  provide probabilistic guarantees for all feasible solutions of the program under study. This result is particularly useful in cases where numerical difficulties related to the convergence  of the solution-seeking algorithm hinder the exact quantification of the optimal solution. Furthermore, it can be applied to cases where the agents' incentives lead to a suboptimal solution, e.g., under a non-cooperative setting. We then focus on optimization programs where the cost function admits an aggregate representation and depends on uncertainty while constraints are deterministic. By exploiting
the structure of the program under study and leveraging the so called support rank notion, we
provide agent-independent robustness certificates for the optimal solution, i.e., the
constructed bound on the probability of constraint violation
does not depend on the number of agents, but only on the
dimension of the agents' decision.
This substantially reduces the number of samples
required to achieve a certain level of probabilistic robustness
as the number of agents increases. 
All robustness certificates provided in this paper are distribution-free and can be used alongside any optimization algorithm. Our theoretical results are accompanied by a numerical case study involving a charging control problem of a fleet of electric vehicles. 
\end{abstract}
\begin{keyword}
	 Scenario approach, Multi-agent problems, Optimization, Feasibility guarantees, Electric Vehicles
\end{keyword}
%\begin{keyword}
%\texttt{elsarticle.cls}\sep \LaTeX\sep Elsevier \sep template
%\MSC[2010] 00-01\sep  99-00
%\end{keyword}
\end{frontmatter}
\section{Introduction}
\subsection{Background}
A vast amount of today's challenges in the domains of energy systems \cite{Ioli2017}, \cite{Gharesifard2016}, traffic networks \cite{Dario2019}, economics \cite{Acemoglu2013} and the social sciences \cite{Ghaderi}, \cite{Acemoglu} revolve around multi-agent systems, i.e., systems which comprise different entities/agents that interact with each other and make decisions, based on individual or collective criteria. Existing literature provides a plethora of methods to solve such problems. Each method is appropriately designed to fit the structure of these interactions and the agents' incentives. To address challenges, related to the computational complexity issues and privacy concerns of solving a multi-agent optimization problem in a centralised fashion, several decentralised or distributed coordination schemes have been proposed \cite{BertsekasParallel},\cite{Parikh}. In the decentralised case, agents optimize their cost function locally and then communicate their strategies to a central authority. In the distributed case, a central authority is absent and agents communicate with each other over a network, exchanging information with agents considered as neighbours given the underlying communication protocol.  In either case, the presence of uncertainty in such problems constitutes a critical factor that, if not taken into account, could lead to unpredictable behaviour, hence it is of major importance to accompany the solutions of such algorithms with robustness certificates. In this paper we assume that the probability distribution of the uncertainty is considered unknown and adopt a data-driven approach, where the uncertainty is represented by means of scenarios that could be either available as historical data, or extracted via some prediction model. To this end, we work under the framework of the so called scenario approach. \par The scenario approach is  a well-established mathematical technique \cite{CampiCalafiore2006}, \cite{CampiGaratti2008}, \cite{Campi2008},  and  still a highly active research area (see \cite{Campi2018}, \cite{CampiGarattiRamponi2018} for some recent developments), originally introduced to provide \emph{a priori} probabilistic guarantees for solutions of uncertain convex optimization programs. Recently, the theory has been extended to non-convex decision making problems \cite{Campi2018}, \cite{CampiGarattiRamponi2018} where the probabilistic guarantees are obtained in an \emph{a posteriori} fashion. The main advantage of the scenario approach is its applicability under very general conditions, since it does not require the knowledge of the uncertainty set or the probability distribution, unlike robust \cite{Bai} and stochastic optimization \cite{BirgLouv97}. According to the scenario approach, the original problem can be approximated by solving a computationally tractable approximate problem, the so called scenario program consisting of a finite number of constraints, each of them corresponding to a different realization of the uncertain parameter. In this realm we present some of  the challenges that pertain uncertain multi-agent systems.

\begin{table*}[!]
	\setlength{\arrayrulewidth}{0.5pt}
	\centering
	\resizebox{\linewidth}{!}{
	\begin{tabular}{|p{3.5cm}|p{2.2cm}|p{3.4cm}|p{5cm}|p{3cm}|}
		
		\hline
		\ & \  & \ &  \  & \ \\
		\normalsize{\  Type of solution} & \normalsize Nature of certificate & \normalsize \ \  Scalability & \normalsize \ \ \ \  \ \  \ \  \ Class of problems & \normalsize  \ \ \  \ \ \  \ \ Result \\ 
		\hline 
		
		\textbf{Entire feasible set} & \textbf{\emph{A posteriori}}  & \textbf{Agent dependent} & \textbf{Feasibility programs with uncertain convex constraints} & \textbf{Thm. 2} \\
		
		\hline 
		Subset of feasible solutions & \emph{A priori}  & Agent dependent & Feasibility programs with uncertain convex constraints & Thm. 2 of \cite{GrammaticoNonConvex}\\
		\hline
		Variational inequality solution set  &\emph{A posteriori} & Agent dependent  & Variational inequality problems with uncertain convex constraints & Thm. 1 of \cite{Fabiani2020b} \\ 
		\hline
		Variational inequality solution set  &\emph{A posteriori} & Agent dependent  & Aggregative games with uncertain affine constraints & Thm. 1 of \cite{Fabiani2020a} \\
		\hline 
		\hline
		\textbf{Unique optimizer} & \textbf{\emph{A priori}} & \textbf{Agent independent}  &  \textbf{Optimization programs with uncertain aggregative term in the cost} & \textbf{Thm. 3} \\ 
		\hline 
		Unique variational inequality solution & \emph{A posteriori} \& \emph{a priori} &  Agent dependent  & Variational inequalities with uncertain cost or constraints &  Cor. 1 of \cite{DarioVI}, Thm. 9 of \cite{Fele2019}, Thm. 5 of \cite{Feleconf2019} (only \emph{a posteriori}) \\
		\hline
	\end{tabular}}
\caption{\normalsize Classification of main results according to their main features and comparison with existing literature}

\end{table*}

\subsection{Challenges to be addressed and main contributions}
In many problems of practical interest, agents' decisions are feasible thus satisfying the imposed constraints, however they are not necessarily optimal. This can be due to a variety of reasons, such as:
\begin{enumerate}
	\item  Numerical difficulties related to the convergence properties of the solution-seeking algorithm that hinder the exact quantification of the optimal solution;
	\item  The nature of agents' incentives. For example, when a  non-cooperative scheme is adopted, the social welfare optimum cannot be usually achieved, thus resorting to suboptimal solution concepts, such as equilibria; 
\end{enumerate} \par
However, even for cases where the nature of the problem allows for the quantification of a social welfare optimum with high precision, a major challenge in investigating the probabilistic robustness of such solutions is the dependence of the provided certificates on the number of agents. Given that we wish to obtain identical probabilistic guarantees as the population increases,  a larger number of samples is required. This fact renders the provided guarantees conservative for large scale applications. As such, it is of utmost importance to show that for certain classes of problems, common in practical applications, the obtained probabilistic guarantees can be agent independent.
The main contributions of this paper are as follows: \\ 
\begin{enumerate}
	\item We address the challenges related to the lack of optimality of the agents' decisions, by considering a set-up, where the choice of the cost function can be arbitrary and the uncertainty affects only the constraints. We, then, leverage recent results of \cite{CampiGarattiRamponi2018} in order to provide \emph{a posteriori} robustness certificates for the entire feasibility region\footnote{In this paper we will focus on optimization programs affected by uncertainty. It is important to mention, though, that our results are applicable to more general uncertain feasibility programs. This allows their use for providing certificates for the feasibility region of other classes of problems, as well, such as variational inequality problems and generalised Nash equilibrium problems \cite{fele-a}.}. The theoretical framework of \cite{fele-a}, initially developed only for uncertain feasibility problems with polytopic constraints, is extended in our set-up to account for general (possibly coupling) uncertain convex constraints.	An immediate consequence of this result is that distribution-free probabilistic guarantees for the entire set of solutions to optimization programs can be provided, thus circumventing the need for selecting a unique optimizer via the enforcement of a tie-break rule.   
	\item We then focus on a specific class of uncertain  multi-agent programs, prevalent in many practical applications, where the cost is considered to be a function of the aggregate decision and affected by uncertainty. A similar problem formulation has been considered  in \cite{Deori2016} and is extended to our set-up to account for the presence of uncertainty in the cost function. Other problems whose structure shares similarities with our work can be found in \cite{gram1},\cite{gram2}, \cite{gram3}, though under a purely deterministic and game-theoretic set-up. 
	Following the recent developments in \cite{feleb} we show based on the notion of the support rank \cite{MorariSupportRank}, that the obtained probabilistic feasibility certificates are agent independent. This result directly outperforms probabilistic feasibility statements obtained by a direct application of the scenario approach theory \cite{CampiGaratti2008}, and has superior scalability properties as it does not depend on the number of agents involved.

\end{enumerate}

The contributions of our main results in comparison with results in the literature are summarised in Table 1. Our first contribution provides probabilistic guarantees, in an \emph{a posteriori} fashion, that hold for the entire feasibility region in contrast with the \emph{a priori} result in \cite{GrammaticoNonConvex}, where the construction of a feasible subset from the convex hull of randomized optimizers may produce a "thin" subset of the feasible region. Another recent contribution \cite{Fabiani2020b}  uses a similar approach to \cite{fele-a} to provide guarantees applicable not for the entire feasibility region but for the solution set of uncertain variational inequalities in an \emph{a posteriori} fashion. For our second contribution, the nature of the certificates is \emph{a priori}. Assuming uniqueness of the solution, we exploit the aggregative structure of the cost  to provide results that are agent independent. The aggregative nature of the cost has been exploited in several works (see \cite{Fele2019}, \cite{Feleconf2019} and \cite{Fabiani2020a}), however, guarantees provided in these works are dependent on the number of agents. It should be apparent from Table 1 that our results are the first of their kind to provide guarantees for the entire feasibility region, as well as the first agent independent result for a particular class of optimization programs. \par

The rest of the paper is organized as follows:
In  Section II  probabilistic guarantees for all feasible solutions of optimization programs with arbitrary cost and uncertain convex constraints are provided. Section III focuses on providing agent-independent robustness certificates for the optimal solution set of a specific class of aggregative optimization programs with uncertain cost. The aforementioned results are used in Section IV in the context of a numerical study on the charging control problem for a fleet of electric vehicles. Section V concludes the paper and provides some potential future research directions.

\subsection{Notation}
Let $\mathcal{N}=\{1, \dots, N\}$ be the index set of all agents, where $N$ denotes their total number and $x_i$ the strategy of agent $i$ taking values in the deterministic set $X_i \subseteq \mathbb{R}^{n}$. 
We denote $x=(x_i)_{i \in \mathcal{N}} \in X =  \prod_{i \in \mathcal{N}}X_i \subseteq \mathbb{R}^{nN}$ the collection of all agents' strategies and $bdry(X)$ the boundary of a set $X$. Similarly, the vector $x_{-i}=(x_j)_{j \in \mathcal{N}, j \neq i} \in \prod_{j \in \mathcal{N}, j \neq i}X_j \subseteq \mathbb{R}^{n(N-1)}$ denotes the collection of the decision vectors of all other agents' strategies except for that of agent $i$. Let $\theta$ be an uncertain parameter defined on the (possibly unknown) probability space $(\Theta,\mathcal{F}, \mathbb{P})$, where $\Theta$ is the sample space, equipped with a $\sigma$-algebra   $\mathcal{F}$  and a probability measure  $\mathbb{P}$.  
Furthermore, let $\{\theta_{m}\}_{{m \in \mathcal{M}} } \in \Theta^M$, $\mathcal{M}=\{1,\dots, M\}$ be a finite collection of $M$ independent and identically distributed (i.i.d.) scenarios/realisations of the uncertain vector $\theta$, where $\Theta^M$ is the cartesian product of multiple copies of the sample space $\Theta$. Finally, $\mathbb{P}^{M}= \prod_{m \in \mathcal{M}} \mathbb{P}$  is the associated product probability measure. The symbols $x$ and $(x_i, x_{-i})$ are used interchangeably in this paper, depending on the context.

\section{Optimization programs with uncertainty in the constraints} 
\label{collective_feasibility}
\subsection{The convex case} 
Consider the following optimization program
\begin{align}
\mathrm{P}_{\Theta}: \min_{x \in X} J(x) \ \text{subject to } \ x \in  \bigcap_{\theta \in \Theta}X_{\theta}, \label{feasibility_problem}
\end{align}
where the cost function $J(x)$ can be chosen arbitrarily (even feasibility programs would be admissible), $x$ is a decision vector taking values in $X \subset\mathbb{R}^{d}$ and  $X_\theta$ is dictated by the uncertain parameter $\theta$.
We seek to provide probabilistic guarantees for all feasible solutions of this class of programs. 

For the optimization program (\ref{feasibility_problem}), we define the following  scenario program, where the multi-sample $\{\theta_{m}\}_{m \in \mathcal{M}}$ is assumed to be drawn in an i.i.d. fashion from $\Theta^M$. 
\begin{align} \label{scenario_feasibility}
\mathrm{P}_{M}: \min_{x \in X} J(x) \ \text{subject to } x  \in \bigcap_{m \in \mathcal{M}} X_{\theta_{m}}.
\end{align}
Our results depend on a convex constraint structure, thus we impose the following assumption: 
\begin{assumption} \label{ass1} 
	\begin{enumerate}
		\item The deterministic constraint set $X$  is a non-empty, compact and convex set.
		\item For any random sample $\theta \in \Theta$,  we have that
		$X_{\theta}=\{x \in \mathbb{R}^{d} : u(x, \theta) \leq 0 \}$, where $u: \mathbb{R}^{d} \times \Theta\rightarrow \mathbb{R}^{q}$ is a vector-valued convex function. 
		\item For any fixed  multi-sample $\{\theta_{m}\}_{m \in \mathcal{M}}$ the convex set $C_M= \{\bigcap_{{m \in \mathcal{M}}} X_{\theta_{m}}\} \bigcap X=\{x \in X : u(x, \theta_{m}) \leq 0, \forall  \ m \in \mathcal{M}\} $  has a non-empty interior.
	\end{enumerate}
\end{assumption}
Assumption \ref{ass1} guarantees that $P_M$ admits at least one solution for any chosen multisample  $\{\theta_m\}_{m \in \mathcal{M}}$.

The optimization program $P_M$ can be equivalently written as $\min J(x) \ \text{subject to} \ x \in C_M.$
Upon finding the feasibility domain $C_M$ of the problem $P_M$, we are interested in investigating the robustness properties collectively for all the points of this domain to yet unseen samples, in other words in quantifying the probability that a new sample $\theta \in \Theta$ is drawn such that the constraint $X_\theta$ defined by this sample is not satisfied by some point $x\in C_M$. This concept, which is of crucial importance for our work, is known in the literature as the probability of violation and  is adapted in our context to represent the probability of violation \emph{of an entire set}. By Definition 1 in \cite{CampiCalafiore2006} 
the probability of violation of a given point $x \in C_M$ is defined as:
\begin{align}
& V(x)=  \mathbb{P} \Big \{ \theta \in \Theta :~ x \notin X_\theta\Big \}. \label{Violationpoint}
\end{align} 
By Assumption \ref{ass1}, the probability of violation can be equivalently written as $V(x)=  \mathbb{P} \Big \{ \theta \in \Theta :~ u(x,\theta)>0 \Big \}$. We can now define the probability of violation of the entire convex set $C_M$.
\begin{definition}  \label{def2}
	Let  $\mathcal{C} \subseteq 2^X$  be the set of all non-empty, compact and convex sets contained in $X$. For any $C_M \in \mathcal{C}$ we define  the probability of violation of the set $C_M$  as  a mapping $\mathbb{V}: \mathcal{C} \rightarrow [0,1]$ given by the following relation: 
	\begin{align*} 
	\mathbb{V}(C_M)&= \sup_{x \in C_M} V(x).
	\end{align*} 
\end{definition} 
In Definition \ref{def3} below two concepts of crucial
importance are introduced.
\begin{definition} \label{def3}
	\begin{enumerate}
		\item  For any $M$, an algorithm is a mapping $A_M: \Theta^M \rightarrow \mathcal{C} \subseteq 2^X$ that associates the multi-sample $\{\theta_m\}_{m \in \mathcal{M}}$ to a unique convex set $C_M \in \mathcal{C}$. 
		\item	Given a multi-sample $\{\theta_{m}\}_{m \in \mathcal{M}} \in \Theta^M$, a set of samples $\{\theta_{m}\}_{m \in I_k} \subseteq \{\theta_{m}\}_{m \in \mathcal{M}}$, where $I_k=\{m_1,m_2,\dots,m_k\}$  is called a support subsample if  $A_k(\{\theta_{m}\}_{m \in I_k})=A_M(\{\theta_{m}\}_{m \in \mathcal{M}})$ i.e., the solution returned by the algorithm when fed with $\{\theta_{m}\}_{m \in I_k}$ coincides with the one obtained when the entire multi-sample is used. The support subsample with the smallest cardinality among all the possible support subsamples, is known as the minimal support subsample. 
		\item  A support subsample function is a mapping of the form $B_M: \{\theta_m\}_{m \in \mathcal{M}}  \rightarrow$ 
		$\{m_1,m_2,\dots,m_k\}$  that takes as input all the samples and returns as output only the indices of the samples that belong to the support subsample. 
	\end{enumerate}
\end{definition}
Note that the notions of support subsample and support subsample function in Definitions \ref{def3}.2, \ref{def3}.3 are respectively referred to as compression set and compression function in \cite{Connection}. 
The cardinality of the support subsample $\{\theta_{m}\}_{m \in I_k}$ is by definition the cardinality of the output of the function $B_M$, namely,  $k=|B_M(\{\theta_m\}_{m \in \mathcal{M}})|$. 
In our set-up the minimal support subsample consists of the indices of the samples that are of support for the entire feasibility region, according to the following definition 
\begin{definition} (Support sample for the feasibility region)
	A sample $\theta_j \in \{\theta_m\}_{m \in \mathcal{M}}$ is said to be of support for the feasible set $C_M$ of $\mathrm{P}_{M}$  if its removal leads to an enlargement of the feasible region, i.e., when $\{ \bigcap_{m \in \mathcal{M}} X_{\theta_m}\}\setminus X_{\theta_j} \supset C_M$ or, equivalently, when the set
	$bdry(X_{\theta_j})\bigcap C_M$ is non-empty. 
\end{definition} \par
The number of support samples of the feasible region or, equivalently, the cardinality of the minimal support subsample is denoted as $F_M$.
The constraints that correspond to indices from the minimal support subsample can be alternatively viewed as an extension of the notion of the facets of a polytope (see Definition 2.1 in \cite{Ziegler1995}) adapted to the more general case of compact and convex sets. Note that a single constraint may give rise to multiple "facets". Figure 1  illustrates the concept of the minimal support subsample by showing the feasible region formed by random convex constraints. Note that only the indices of the samples $\theta_3, \theta_5, \theta_6, \theta_7$ belong to the minimal support subsample, since if we feed only these samples as input into the algorithm $A_M$ the feasible set $C_M$ is returned.
\begin{figure}
	\centering
	\begin{overpic}[clip, trim=10cm 3.5cm 10cm 5cm, scale=0.5]{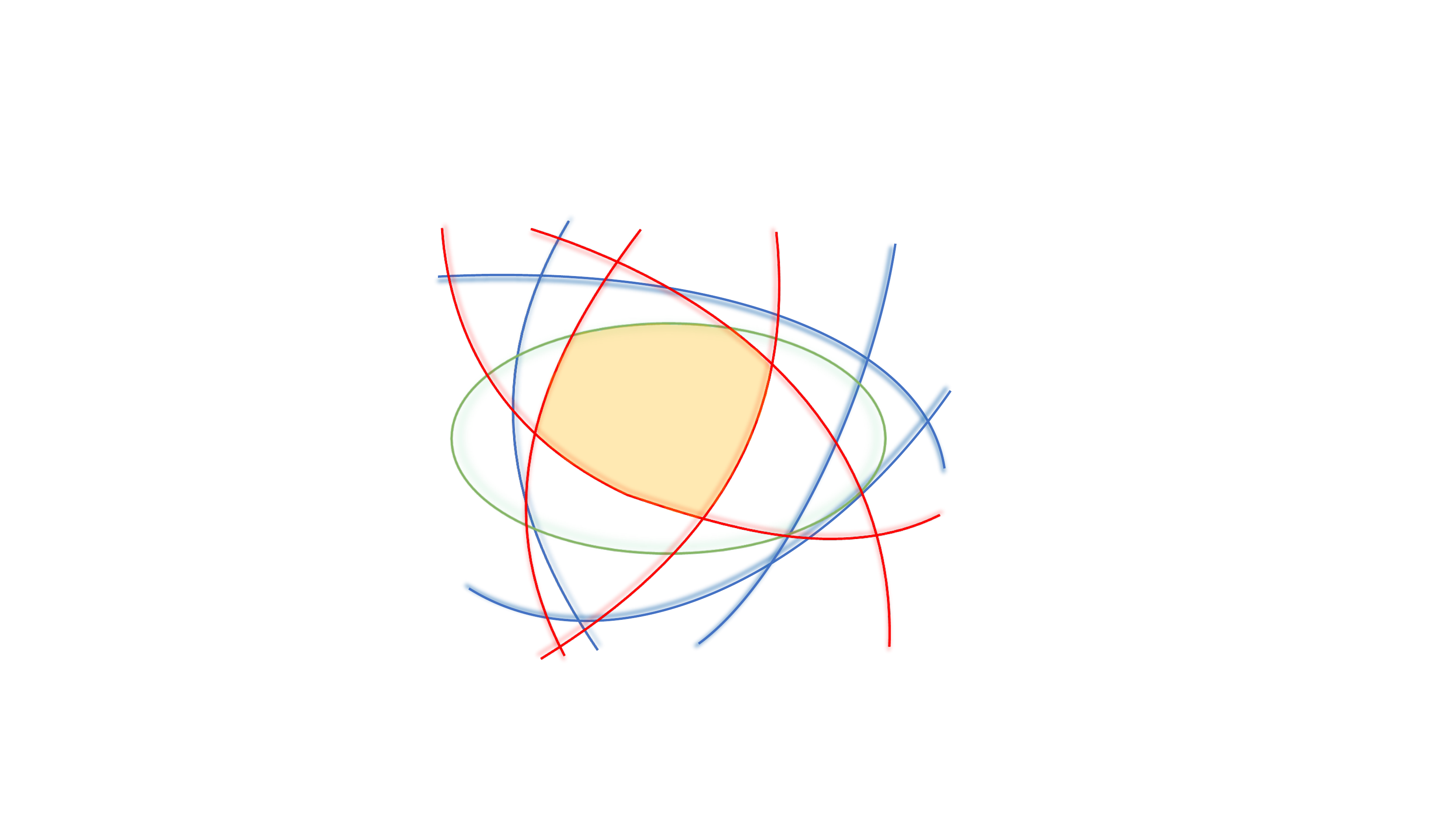}
		\put (4,8) {$\displaystyle \color{blue} X_{\theta_1}$}
		\put(90,32) {$\displaystyle \color{blue} X_{\theta_2}$}
		\put(80,5) {$\displaystyle \color{red} X_{\theta_3}$}
		\put(53,5) {$\displaystyle \color{blue} X_{\theta_4}$}
		\put(7,35) {$\displaystyle \color{forestgreen} X$}
		\put(62,65) {$\displaystyle \color{red} X_{\theta_5}$}
		\put(7,20) {$\displaystyle \color{red} X_{\theta_6}$}
		\put(8,60) {$\displaystyle \color{red} X_{\theta_7}$}
		\put(23,15) {$\displaystyle \color{blue} X_{\theta_8}$}
		\put(35,45) {\Large $\displaystyle \color{black}  C_M $}
	\end{overpic}
	\caption{ The feasibility region $C_M$ and its connection with random convex constraints produced by eight i.i.d. samples $\{\theta_1, \theta_2, \dots, \theta_8\}$. The constraint in green corresponds to the deterministic constraint $X$. Note that only the indices of the samples $\theta_3, \theta_5, \theta_6, \theta_7$ belong to the minimal support subsample since their corresponding constraints (in red) form along with the deterministic constraint $X$ the boundary of $C_M$.}
\end{figure}

%\begin{figure} 

%\end{figure}

Another important notion used in our work is the notion of an extreme point. An extreme point can be viewed as an extension of the vertex of a polytope for arbitrary compact and convex sets and is defined as a point which is not in the interior of any line segment lying entirely in the set. This property is formally presented in the following definition.
\begin{definition} (Extreme points) \cite{simon2011}.
	An extreme point of a convex set C is a point $x \in C$ for which the following property holds:
	If $x$ can be written as a convex combination of the form $x=\lambda x_1+(1-\lambda)x_2$ with $x_1,x_2 \in C$ and $\lambda \in [0, 1]$, then $x_1=x$ and/or $x_2=x$.
\end{definition}
Note that the number of extreme points of a convex set depends on the geometry of the set under study and can also be infinite, e.g., in the case of a $d$-dimensional ball.

Our work focuses on compact and convex sets defined over a finite-dimensional space, where the following theorem can be applied.
\begin{thm} (Minkowski – Caratheodory Theorem) \cite{simon2011}.
	Let $C$ be a compact convex
	subset of $\mathbb{R}^d$ of dimension $d$. Then any point in $C$ is a convex combination of at
	most $d + 1$ extreme points.
\end{thm}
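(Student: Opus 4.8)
The plan is to split the statement into two classical ingredients and then compose them. The first is Minkowski's theorem (the finite-dimensional Krein--Milman statement): every compact convex set $C \subset \bR^d$ coincides with the convex hull of its set of extreme points, $C = \conv(\mathrm{ext}(C))$. The second is Carath\'eodory's theorem: for any $S \subseteq \bR^d$, each point of $\conv(S)$ can be written as a convex combination of at most $d+1$ elements of $S$. Once both are in hand the result is immediate: an arbitrary $x \in C$ lies in $\conv(\mathrm{ext}(C))$ by Minkowski, and Carath\'eodory applied with $S = \mathrm{ext}(C)$ expresses $x$ as a convex combination of at most $d+1$ extreme points.

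For Minkowski's part I would argue by induction on the dimension $d$ of $C$. The base case $d=0$ is trivial, since $C$ is a singleton that is its own extreme point. For the inductive step, fix $x \in C$; if $x$ is already extreme we are done. Otherwise $x$ lies in the interior of some segment contained in $C$, and by compactness this segment extends maximally to endpoints $x_1, x_2$ on the relative boundary of $C$. Each $x_j$ admits a supporting hyperplane whose intersection with $C$ is a face $F_j$ that is again compact and convex but of dimension at most $d-1$. The induction hypothesis writes each $x_j$ as a convex combination of extreme points of $F_j$; since an extreme point of a face of $C$ is also an extreme point of $C$, and $x = \lambda x_1 + (1-\lambda) x_2$ for some $\lambda \in [0,1]$, we obtain $x$ as a convex combination of extreme points of $C$.

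For Carath\'eodory's part I would take any representation $x = \sum_{i=1}^{k} \lambda_i v_i$ with $v_i \in S$, $\lambda_i > 0$ and $\sum_{i=1}^{k} \lambda_i = 1$, and show it can be shortened whenever $k > d+1$. Indeed, more than $d+1$ points in $\bR^d$ are affinely dependent, so there exist scalars $\mu_i$, not all zero, with $\sum_{i=1}^{k} \mu_i = 0$ and $\sum_{i=1}^{k} \mu_i v_i = 0$. Replacing $\lambda_i$ by $\lambda_i - t \mu_i$ leaves both $x$ and the normalisation unchanged for every $t$; choosing $t = \min_{i:\, \mu_i > 0} \lambda_i / \mu_i$ drives at least one coefficient to zero while keeping all coefficients nonnegative, thereby eliminating one point. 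Iterating until $k \le d+1$ yields the claim. The main obstacle I anticipate lies in the geometric bookkeeping of the Minkowski step---specifically verifying that the endpoints of a maximal segment fall in proper faces of strictly lower dimension and that extremality is inherited from a face to $C$---whereas the Carath\'eodory reduction is a routine linear-algebraic argument.
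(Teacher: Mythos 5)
Your proposal is correct. Note, however, that the paper does not prove this statement at all: it is imported as a classical result with a citation to \cite{simon2011}, so there is no in-paper argument to compare against. Your decomposition into Minkowski's theorem (the finite-dimensional Krein--Milman statement, $C = \conv(\mathrm{ext}(C))$, proved by induction on dimension via maximal chords and supporting hyperplanes) followed by Carath\'eodory's reduction to at most $d+1$ points is exactly the standard textbook route, and both halves are sound as sketched. The two points you flag as needing care are indeed the only delicate ones, and both go through: the endpoints of a maximal chord must lie on the relative boundary (otherwise the chord could be extended), so they sit in proper faces of dimension at most $d-1$; and extremality passes from a face $F = C \cap H$ to $C$ because any segment of $C$ whose midpoint lies in the supporting hyperplane $H$ must lie entirely in $H$. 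The Carath\'eodory step is also essential here rather than cosmetic --- the induction alone only bounds the number of extreme points needed by roughly $2^d$, and the affine-dependence reduction is what brings it down to $d+1$.
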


We equip the set of extreme points of the convex set $C_M$ with indices and denote this set of indices $E(C_M)$, while $\text{bdry}(C_M)$ refers to the boundary of $C_M$. It is important to emphasize that the dependence of the convex set $C_M$ on the multi-sample $\{\theta_m\}_{m \in \mathcal{M}}$ implies that $|B_M|$ (and $|E(C_M)|$) are random variables that depend on $\{\theta_m\}_{m \in \mathcal{M}}$. 

Next we define the set 
\begin{align}
\mathcal{C}_\theta&=\{C \in \mathcal{C}:  u(x_j,\theta) \leq 0, \  \forall \ j \in E(C) \} \nonumber \\
&=\{C \in \mathcal{C} : C \subseteq X_\theta \} \label{P},
\end{align}
of all the non-empty, compact and convex sets where elements $C$ satisfy the constraint associated with the sample $\theta \in \Theta$. Note that if all the extreme points of the set satisfy the inequality $u(\cdot,\theta) \leq 0$, then every point $x\in C$ of the set satisfies it as well. To see this, note that $x$ can always be expressed as a convex combination of the set's extreme points. \par 

Our aim is to provide probabilistic guarantees for a non-empty, convex and compact set $C_M$ constructed by the intersection of  $M$ random realizations of the uncertain convex constraint $X_\theta=\{x \in \mathbb{R}^d: u(x,\theta) \leq 0 \}$, where $u: \mathbb{R}^{d} \times \Theta \rightarrow \mathbb{R}^{q}$ is a convex function with respect to the decision variable $x$.  
\begin{thm} \label{Theorem1}
	Consider Assumption \ref{ass1} and any $A_M, B_M$ as in Definition \ref{def3}. Fix $\beta \in (0,1)$ and define the violation level function $\epsilon: \{0,...,M\} \rightarrow [0,1]$ such that
	\begin{align}
	\epsilon(M)=1 \ \text{and} \  \sum_{k=0}^{M-1} {M \choose k}(1-\epsilon(k))^{M-k}=\beta. \label{epsilon}
	\end{align} 
	We have that 
	\begin{align*}
	\mathbb{P}^{M} \Big \{\{\theta_m\}_{m \in \mathcal{M}} \in \Theta^{M}:~\mathbb{V}(C_M)> \epsilon(k^*) \Big \} \leq \beta,
	\end{align*}
	where $k^*=F_M$ is the number of support samples according to Definition 3.
\end{thm}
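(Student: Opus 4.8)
The plan is to view the pair $(A_M,B_M)$ together with the set-violation mapping of Definition \ref{def2} as a \emph{consistent compression scheme}, and then to invoke the a posteriori bound encoded by the polynomial \eqref{epsilon}, in the spirit of \cite{CampiGarattiRamponi2018} and the compression viewpoint of \cite{Connection}. I would begin with a reduction that replaces the supremum in Definition \ref{def2} by a single set-level event. For every $x \in C_M$ one has $\{\theta : u(x,\theta)>0\} \subseteq \{\theta : C_M \not\subseteq X_\theta\}$, hence $V(x) \le \mathbb{P}\{\theta : C_M \not\subseteq X_\theta\}$, and taking the supremum over $x \in C_M$ gives $\mathbb{V}(C_M) \le \mathbb{P}\{\theta : C_M \not\subseteq X_\theta\}$. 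It therefore suffices to bound the probability that a fresh sample is of support for the feasibility region, i.e. that its constraint cuts into $C_M$.

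Next I would make the convex compression structure explicit. By \eqref{P}, a compact convex $C$ satisfies $C \subseteq X_\theta$ if and only if all of its extreme points satisfy $u(\cdot,\theta)\le 0$: the nontrivial direction is the Minkowski--Caratheodory theorem, since any $x \in C$ is a convex combination of at most $d+1$ extreme points and $u(\cdot,\theta)$ is convex. Consequently the event $\{C_M \not\subseteq X_\theta\}$ is controlled entirely by the extreme points indexed by $E(C_M)$, and a sample $\theta_j$ is of support precisely when $\mathrm{bdry}(X_{\theta_j}) \cap C_M \neq \emptyset$, as in Definition 3. I would then verify the consistency property demanded of a compression scheme: discarding every non-support sample activates no new constraint and leaves $C_M$ unchanged, so $A_k(\{\theta_m\}_{m\in I_k})=A_M(\{\theta_m\}_{m\in\mathcal{M}})$ with $k = F_M = |B_M|$ as in Definition \ref{def3}. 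This certifies that $B_M$ returns a genuine minimal support subsample and that $C_M$ is reconstructible from it alone.

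The core is then a combinatorial de-conditioning. I would partition $\Theta^M$ by the cardinality $k=|B_M| \in \{0,\dots,M\}$ and, within each $k$, by the index set $I$ of size $k$ returned by $B_M$. On the event that $I$ is the support subsample, consistency forces $C_M$ to depend only on $\{\theta_m\}_{m\in I}$, while the remaining $M-k$ samples are non-support and hence lie in the non-cutting region $\{\theta : C_M \subseteq X_\theta\}$ of the reconstructed set. Conditioning on $\{\theta_m\}_{m\in I}$ and using the i.i.d. assumption, each such draw lies in that region with probability at most $1-\mathbb{V}(C_M)$, which on the bad event $\{\mathbb{V}(C_M) > \epsilon(k)\}$ is strictly less than $1-\epsilon(k)$, contributing a factor $(1-\epsilon(k))^{M-k}$. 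Summing over the $\binom{M}{k}$ choices of $I$ and over $k=0,\dots,M-1$, and noting that the $k=M$ case contributes nothing since $\epsilon(M)=1$ makes $\{\mathbb{V}(C_M)>\epsilon(M)\}$ empty, the total is $\sum_{k=0}^{M-1}\binom{M}{k}(1-\epsilon(k))^{M-k}=\beta$ by \eqref{epsilon}. Specialising to the realised cardinality $k^*=F_M$ yields the claim.

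The main obstacle I anticipate is precisely this de-conditioning step, because $C_M$, and therefore both $\mathbb{V}(C_M)$ and the identity of its support samples, depends on the \emph{entire} multisample, so the event ``$I$ is the support subsample'' is not a priori independent of the remaining draws. The resolution is the consistency property established above: once $C_M$ is known to be a function of the support subsample alone, one may condition on those samples and treat the others as fresh i.i.d. tests of a fixed reconstructed set. Pushing this through for general convex constraints, rather than the polytopic setting of \cite{fele-a}, is exactly where the extreme-point characterisation via Minkowski--Caratheodory is indispensable, and where one must ensure that the minimal support subsample is almost surely well defined so that $F_M$ is a legitimate random variable and the decomposition over index sets is exhaustive.
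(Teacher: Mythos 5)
Your proposal is correct and follows the same two-stage skeleton as the paper's proof, but the two halves are handled somewhat differently. For the first stage, you obtain $\mathbb{V}(C_M)\le \mathbb{P}\{\theta: C_M\not\subseteq X_\theta\}$ from the elementary containment $\{\theta: u(x,\theta)>0\}\subseteq\{\theta: C_M\not\subseteq X_\theta\}$ for $x\in C_M$, which is cleaner than the paper's route through the Minkowski--Caratheodory decomposition and the convexity of $u$; in the paper that machinery is really there to justify the extreme-point characterisation \eqref{P} of the class $\mathcal{C}_\theta$, i.e.\ to make the set-valued map $A_M$ a legitimate ``algorithm'' over compact convex sets, rather than being needed for the inequality itself. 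For the second stage, the paper simply verifies Assumption~1 of \cite{CampiGarattiRamponi2018} (namely $A_M(\{\theta_m\}_{m\in\mathcal{M}})\in\mathcal{C}_{\theta_m}$ for all $m$, which holds by construction of $C_M$ as the intersection) and invokes their Theorem~1 as a black box, whereas you unpack and re-derive that theorem: partitioning over the cardinality and identity of the support subsample, using consistency of $(A_M,B_M)$ to de-condition, and accumulating the $\binom{M}{k}(1-\epsilon(k))^{M-k}$ terms of \eqref{epsilon}. Your version is self-contained and makes explicit why the compression/consistency property is what licenses treating the non-support samples as fresh i.i.d.\ tests of a fixed set (the subtlety you correctly flag); the paper's version is shorter and delegates exactly that combinatorial argument to the cited reference. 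Both are sound, and your specialisation to the minimal support subsample $k^*=F_M$ matches the statement.
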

\begin{proof}
	For a fixed multisample  $ \{\theta_m\}_{m \in \mathcal{M}} \in \Theta^{M}$ consider an arbitrary point $x \in C_M$. Then,  the following inequalities are satisfied
	\begin{align}
	&  V(x)  =  \mathbb{P} \Big \{ \theta \in \Theta :~ x  \notin X_\theta \Big \}=  \mathbb{P} \Big \{ \theta \in \Theta :~u(x,\theta)>0\Big \} \nonumber \\
	&\stackrel{(i)}{=} \mathbb{P} \Big \{ \theta \in \Theta :~ u( \sum_{j \in I_{d+1}} {\lambda_j x_j},\theta)>0\Big \}  \nonumber \\
	& \stackrel{(ii)}{\leq} \mathbb{P} \Big \{ \theta \in \Theta :~ \sum_{j \in I_{d+1}}\lambda_j  u( x_j,\theta)>0\Big \}  \nonumber \\ 
	& \leq \mathbb{P} \Big \{ \theta \in \Theta :~  \sum_{j \in I_{d+1}}\lambda_j \max_{j \in I_{d+1}} u( x_j,\theta)>0\Big \}  \nonumber \\
	& \leq \mathbb{P} \Big \{ \theta \in \Theta :~ \max_{j \in I_{d+1}} u( x_j,\theta) >0\Big \}   \nonumber  \\
	& = \mathbb{P}\bigg \{ \bigcup_{j \in I_{d+1}} \Big \{ \theta \in \Theta :~ u(x_j,\theta) >0\Big \} \bigg \}  \nonumber   \\
	&\stackrel{(iii)}{\leq} \mathbb{P}\bigg \{ \bigcup_{j \in E(C_M)} \Big \{ \theta \in \Theta :~ u(x_j,\theta) >0\Big \} \bigg \},  \label{result1}
	\end{align}
	Equality (i) is derived from Theorem 1, where the set under study is the convex set $C_M$. In our case, the Minkowski-Caratheodory theorem states that any arbitrary point of the set $x \in C_M$  can be represented as a convex combination of at most $d+1$  extreme points of $C_M$, which means that there exists a subset of extreme points  $ \{x_j\}_{j \in I_{d+1}} \subseteq \{x_j\}_{j \in E(C_M)} $  such that  $x=\sum_{j \in I_{d+1}} {\lambda_j x_j}$, where $\sum_{j \in I_{d+1}} \lambda_j=1$ and $\lambda_j \geq 0, \ \forall \ j \in I_{d+1}$. Equality (ii) stems from the fact that $u$ is a convex function of $x$ for any given $\theta \in \Theta$. The last inequality follows from the fact that $I_{d+1}$ is a set of indices corresponding to extreme points and as such is a subset of $E(C_M)$. Since (\ref{result1}) holds for all $x \in C_M $, it can equivalently be written as 
	\begin{equation}
	\mathbb{V}(C_M)=\sup_{x \in C_M} V(x) \leq \mathbb{P}\bigg \{ \bigcup_{j \in E(C_M)} \Big \{ \theta \in \Theta :~ u( x_j,\theta) >0\Big \} \bigg \}. \nonumber
	\end{equation}
	Therefore, for any multi-sample $\{\theta_m\}_{m \in \mathcal{M}}$ and for any cardinality (not necessarily minimal) of the support subsample $k \in \{1,...,N\}$ the following inequalities are satisfied:
	\begin{align} 
	&\mathbb{P}^{M} \Big \{  \{\theta_m\}_{m \in \mathcal{M}} \in \Theta^{M} :~ \mathbb{V}(C_M) > \epsilon(k^*) \Big \} \nonumber \\
	&\leq \mathbb{P}^{M} \Big \{  \{\theta_m\}_{m \in \mathcal{M}} \in \Theta^{M} \!\!:~ 
	\!\!\!\! \nonumber \\
	& \ \ \ \ \ \ \ \ \ \ \ \ \mathbb{P}\Big \{ \bigcup_{j \in E(C_M)}  \Big \{ \theta \in \Theta : u(x_j,\theta)>0 \Big \}> \epsilon(k^*) \Big \} \nonumber\\
	&= \mathbb{P}^{M} \! \Big \{\! \{\theta_m\}_{m \in \mathcal{M}}\! \!\in \Theta^{M}\! :\!\!\!\!~ \nonumber \\
	& \ \ \ \  \ \  \ \  \ \ \  \ \mathbb{P} \Big \{ \!\!\theta \in \Theta\! : \ \exists \ \!\!  j \in E(C_M), u(x_j,\theta)\!\!>\!\! 0 \Big \}\!\!\!>\!\epsilon(k^*) \Big \} \nonumber\\
	&=\mathbb{P}^{M} \Big \{  \{\theta_m\}_{m \in \mathcal{M}} \in \Theta^{M}:~ \nonumber \\
	& \ \ \ \  \ \ \  \ \ \  \ \ \mathbb{P} \Big \{ \theta \in \Theta :~ C_M \not\subseteq X_\theta\Big \}  > \epsilon(k^*) \Big \}, \label{first}
	\end{align}
	where the last equality is due to (\ref{P}). Define now an algorithm $A_M$ as in Definition  \ref{def3}.1, that returns the convex set confined by the feasibility region of $C_M$. By construction, $A_M$ satisfies Assumption 1 of \cite{CampiGarattiRamponi2018},  since for any multi-sample $\{\theta_m\}_{m \in \mathcal{M}}$ it holds that $A_M(\{\theta_m\}_{m \in \mathcal{M}}) \in \mathcal{C}_{\theta_m}$, for all $m \in \mathcal{M}$. The satisfaction of Assumption 1  paves the way for the use of Theorem 1 of \cite{CampiGarattiRamponi2018}. 
	In particular, Theorem 1 of \cite{CampiGarattiRamponi2018} implies that the right-hand side of (\ref{first}) can be upper bounded by $\beta$.
	
	As such, we have that
	\begin{align}
	&\mathbb{P}^{M} \Big \{\{\theta_m\}_{m \in \mathcal{M}} \in \Theta^{M}\!\!:\!\!~\mathbb{P} \Big \{ \theta \in \Theta :~\!\!\!\! C_M \not\subseteq X_\theta\Big \}  > \epsilon(k^*) \Big \} = \nonumber \\
	&\mathbb{P}^{M} \Big \{ \{\theta_m\}_{m \in \mathcal{M}} \in \Theta^{M}\!\!:\!\!~\mathbb{P} \Big \{ \theta \in \Theta :~\!\!\!\! C_M \notin \mathcal{C}_\theta \Big \}  > \epsilon(k^*) \Big \} \leq \beta. \label{second}
	\end{align}
	From (\ref{first}) and (\ref{second}) we obtain that:
	\begin{align}
	\mathbb{P}^{M} \Big \{ \{\theta_m\}_{m \in \mathcal{M}} \in \Theta^{M}:~\mathbb{V}(C_M) > \epsilon(k^*) \Big \}  \leq \beta,
	\end{align}
	thus concluding the proof.
\end{proof}

The result of Theorem \ref{Theorem1} implies that with confidence at least $1-\beta$, the probability that there exists at least one feasible solution of $C_M$ that violates the constraints for a new realization $\theta \in \Theta$, is at most equal to $\epsilon(k^*)$. 	Note that our guarantees trivially hold for any subregion of the feasible set. However, the support subsample cannot be easily computed in the general case. Restricting our attention to programs subject to uncertain affine constraints, provides the means to quantify the support subsample.  

\subsection{The polytopic case}
Assuming the presence of affine constraints only, we replace Assumption \ref{ass1} with the following:
\begin{assumption} \label{polytope}
	Consider Assumption \ref{ass1} and further assume that $X$ is polytopic\footnote{A polytope $\Pi \in  \mathbb{R}^{d}$ can be expressed by its H-representation, i.e., the intersection of a finite number of halfspaces, and also as the convex hull of its vertex set $v(\Pi)=\{x_1,...,x_Q \}  $ i.e, $\Pi=conv(v(\Pi))=\{ \sum_{j=1}^{Q}{x_j\lambda_j} : \sum_{j=1}^{Q}{\lambda_j}=1,  \lambda_j \geq 0, \ j=1,...,Q\}$, where $v(\cdot)$ and $conv(\cdot)$ denote the set of vertices of the polytope  and the convex hull, respectively. This representation is generally known as $V$-representation.} and $u(x,\theta) = a^Tx - b \leq 0 $, where $a \in \mathbb{R}^{d}$, $b \in \mathbb{R}$ and $\theta=(a^T \  b) \in \mathbb{R}^{d+1}$.
\end{assumption}
Since all feasibility sets are now polytopic, rather than $C$ and $C_M$, we use $\Pi$ and $\Pi_M$, respectively. 
Under Assumption \ref{polytope}, the cardinality of the minimal support subsample coincides by definition with the number of random facets (see Definition 2.1 in \cite{Ziegler1995}) of the polytope and Theorem \ref{Theorem1} gives rise to the following corollary. 

\begin{cor} \label{Theorem_polytope}
	Consider Assumption \ref{polytope} and any $A_M, B_M$ as in Definition \ref{def3}. Fix $\beta \in (0,1)$ and define the violation level  $\epsilon: \{0,...,M\} \rightarrow [0,1]$ as a function such that
	\begin{align}
	\epsilon(M)=1 \ \text{and} \  \sum_{k=0}^{M-1} {M \choose k}(1-\epsilon(k))^{M-k}=\beta.
	\end{align} 
	We have that 
	\begin{align*}
	\mathbb{P}^{M} \Big \{\{\theta_m\}_{m \in \mathcal{M}} \in \Theta^{M}:~\mathbb{V}(\Pi_M)> \epsilon(k^*) \Big \} \leq \beta,
	\end{align*}
	where $k^*=F_M$ is the number of facets  of $\Pi_M$.
\end{cor}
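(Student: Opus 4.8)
The plan is to obtain Corollary 1 as an immediate specialization of Theorem 1: once the affine, polytopic setting of Assumption 2 is recognized as a particular instance of the general convex setting of Assumption 1, the probabilistic bound follows verbatim, and the only genuinely new content is the identification of the number of support samples $F_M$ with the number of random facets of $\Pi_M$.

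First I would check that Assumption 2 implies Assumption 1. Since every polytope is a non-empty, compact, convex set, part 1 of Assumption 1 is inherited. The uncertain constraint $u(x,\theta)=a^\top x - b$ is affine, hence convex in $x$ for each fixed $\theta$, so part 2 holds with $q=1$, while part 3 (non-empty interior of $\Pi_M$) is carried over unchanged. With all hypotheses of Theorem 1 in force, I would apply Theorem 1 directly, replacing the convex set $C_M$ by the polytope $\Pi_M$, to conclude
\begin{align*}
\mathbb{P}^{M}\Big\{\{\theta_m\}_{m \in \mathcal{M}} \in \Theta^{M} : \mathbb{V}(\Pi_M) > \epsilon(k^*)\Big\} \leq \beta,
\end{align*}
with $k^* = F_M$ equal to the number of support samples of $\Pi_M$. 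The Minkowski--Caratheodory step underlying Theorem 1 applies unchanged, as polytopes are compact and convex.

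It then remains to argue that, under Assumption 2, $F_M$ coincides with the number of random facets. Here each random constraint $X_{\theta_m}=\{x : a_m^\top x \leq b_m\}$ is a halfspace with boundary hyperplane $\text{bdry}(X_{\theta_m})=\{x : a_m^\top x = b_m\}$. By the support-sample definition, $\theta_j$ is of support precisely when removing it enlarges the feasible region, equivalently when the face $\{x \in \Pi_M : a_j^\top x = b_j\}$ is a proper $(d-1)$-dimensional face, i.e. a facet of $\Pi_M$. Since an affine hyperplane meets the convex polytope $\Pi_M$ in a single face, I would set up a bijection between support samples and random facets: each support sample contributes exactly one facet, and each random facet is supported by exactly one constraint hyperplane. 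This identifies the minimal support subsample with the collection of facet-defining random constraints, so $F_M$ equals the number of random facets, completing the corollary.

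The main obstacle I anticipate is making this last identification fully rigorous. One must exclude degenerate configurations in which a random hyperplane touches $\Pi_M$ only along a lower-dimensional face --- in which case its removal leaves the feasible region unchanged and it must not be counted --- and one must confirm that facets of $\Pi_M$ inherited from the deterministic polytope $X$ are excluded, since $X$ is not a sampled constraint and hence never enters the support subsample. Both points are governed by the ``removal enlarges the feasible region'' clause of the support-sample definition, which automatically filters out non-facet-defining and deterministic constraints; the care needed is purely in spelling out why the facet count is exactly $F_M$.
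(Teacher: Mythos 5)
Your proposal is correct and follows essentially the same route as the paper: the paper presents this result as an immediate consequence of Theorem 2, noting only that under Assumption 2 the cardinality of the minimal support subsample coincides by definition with the number of random facets of $\Pi_M$. Your additional care in excluding degenerate (non-facet-defining) hyperplanes and the facets inherited from the deterministic set $X$ is a sensible elaboration of that identification, but it does not change the argument.
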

Note that, even though during the proof of our theorem we also use the vertices of the polytope, only the number of facets is needed  to provide probabilistic guarantees for the entire feasibility region. This feature is appealing from a computational point of view as, in most practical cases, the constructed polytope has a significantly smaller number of facets than extreme points.
To illustrate this, consider a finite horizon multi-agent control problem with $N$ agents, where each agent's decision is subject to upper and lower bounds at each time instance $t \in \{1,\ldots,n\}$. Hence, for a multi-sample $\{\theta_m\}_{m=1}^M \in \Theta^M$,  the feasibility domain $\prod_{t=1}^{n}\prod _{i=1}^{N}\bigcap_{m=1,\dots,M}[\underline{x}_t^i (\theta_m),\overline{x}_t^i(\theta_m)]$ of the problem is a hyperrectangle whose number of facets $F=2Nn$ grows linearly with respect to the number of decision variables, while the number of vertices is given by $V=2^{Nn}$, which grows at an exponential rate with respect to $Nn$. Such constraints arise in several applications including the electric vehicle scheduling problem of Section \ref{EVcharging}.   \par
Note that, as the dimension of the decision vector increases, evaluating the minimal support subsample becomes computationally challenging. However, several efficient algorithms have been proposed for detecting redundant constraints out of the initial set of affine constraints. The currently fastest algorithm for redundancy detection is Clarkson's algorithm \cite{Clarkson}. 
Reducing the computational complexity of Clarkson's algorithm is still an active research area in computational geometry and combinatorics. One recent noteworthy attempt can be found in \cite{May}. \par

\section{Optimization programs with uncertainty in the cost} \label{Optimization}
\subsection{Optimization setting}
In this section we show that for a specific class of problems frequently arising in practical applications, the probabilistic feasibility guarantees for the optimizers of the problem can be substantially improved by leveraging the notion of the so called support rank \cite{MorariSupportRank}.  
Assuming an uncertain cost function of a specific form and deterministic local constraints, we consider now the following program
\begin{align}
P : \min_{x \in X} J(x),
\end{align}
where $J(x)=f(x)+\max\limits_{\theta \in \Theta}g(x, \theta)$ and $f: X \rightarrow \mathbb{R}$, $g: X \times \Theta \rightarrow \mathbb{R}$ is the deterministic and the uncertain part of the cost function, respectively. In addition, the cost under study satisfies the following assumption
\begin{assumption} \label{cost_function} 
	\begin{enumerate} 
		\item  $f$ is jointly convex with respect to all agents' decision vectors, and the set $X$ is non-empty, compact and convex.
		\item  $g$ takes the aggregative form 
		\begin{align*}
		g(x, \theta)= \sum\limits_{i \in \mathcal{N}}g_i(x_i,x_{-i}, \theta) \ \text{and} \\ 
		\ g_i(x_i,x_{-i}, \theta)=x_i^T(A(\theta)\sigma(x)+b(\theta)), 
		\end{align*}
		where $\sigma: X \rightarrow \mathbb{R}^{n}$ is a mapping $(x_i)_{i  \in \mathcal{N}} \mapsto \sum\limits_{i \in \mathcal{N}} x_i$ and  $A: \Theta \rightarrow \mathbb{R}^{n \times n}$, $ b: \Theta \rightarrow \mathbb{R}^{n}$ are uncertain mappings with $A(\theta)$ being a symmetric positive semi-definite matrix for all $\theta \in \Theta$. 
	\end{enumerate}
\end{assumption}
Under Assumption \ref{cost_function} the function $J$ is convex, as the pointwise maximum of an arbitrary number of convex functions is itself a convex function \cite{Boyd2004}. From Assumption \ref{cost_function}.2  the uncertain counterpart of the cost function under study takes the form 
\begin{align*}
&g(x, \theta)=\sigma(x)^T(A(\theta)\sigma(x)+b(\theta)). 
\end{align*}  
The proposed structure captures a wide class of engineering problems, including the electric vehicle charging problem detailed in Section IV.
Since $g$ is convex, using an epigraphic reformulation we recast $P$  to the equivalent semi-infinite program
\begin{align}
P^{'}: &\min\limits_{x \in X, \gamma \in \mathbb{R}} f(x)+\gamma \nonumber \\
& \text{subject to} \ h(x,\gamma, \theta) \leq 0, \ \forall \ \theta \in \Theta \label{constraint},
\end{align}
where $h(x,\gamma, \theta)=g(x,\theta) -\gamma$.
In addition, if $(x^*, \gamma^*)$ is the optimal solution of problem $P^{'}$, then $x^*$ is the optimal solution of the original problem $P$. Due to the presence of uncertainty and the possibly infinite cardinality of $\Theta$, problem $P^{'}$ is very difficult to solve, without imposing any further assumptions on the geometry of the sample set $\Theta$ and/or the underlying probability distribution $\mathbb{P}$. 
To overcome this issue, we adopt again a scenario-based scheme \cite{IntroScenarioApproach}. The corresponding scenario program of the uncertain semi-infinite program $P^{'}$ is thus given by
\begin{align}
P^{'}_{SC}: &\min\limits_{x \in X, \gamma \in \mathbb{R}} f(x)+\gamma \nonumber \\
& \text{subject to} \ h(x,\gamma, \theta_{m}) \leq 0, \ \forall \ m \in \mathcal{M},
\end{align}
where $\{\theta_m\}_{m \in \mathcal{M}} \in \Theta^M$ is an i.i.d. multi-sample of cardinality $M$.
For the scenario program under study, we introduce the following assumption:
\begin{assumption} \label{asscenario}
	\begin{enumerate}
		\item  For any multi-sample $\{\theta_m \}_{m \in \mathcal{M}}$, the scenario program $P^{'}_{SC}$ admits a feasible solution. 
		\item The optimal solution $(x^*, \gamma^*)$ of the  scenario program $P^{'}_{SC}$ is unique.
	\end{enumerate}
\end{assumption}
In case multiple optimal points exist, one can use a convex tie-break rule to select a unique solution.
The following concept, at the core of the scenario approach, is important for the derivation of the results in the next subsection, where agent-independent robustness certificates are provided for the optimal solution. Note that this is similar to Definition 3, but it refers now to the optimal solution and not to the feasibility region.
\begin{definition}(Support constraint \cite{CampiGaratti2008}) \label{sample}
	Fix any i.i.d. multisample $ \{\theta_{m}\}_{m \in \mathcal{M}} \in \Theta^M$  and let $x^*_0=x^*_0( \{\theta_{m}\}_{m \in \mathcal{M}})$ be the unique optimal solution of the corresponding scenario program of $P$, when all the $M$ samples are taken into account. Let $x^*_{-s}=x^*_{-s} (\{\theta_{m}\}_{m \in \mathcal{M}} \setminus \theta_{s})$ be the optimal solution obtained after removal of sample  $\theta_{s}$. If $x^*_0 \neq x^*_{-s}$ we say that the constraint that corresponds to sample $\theta_s$ is a support constraint.
\end{definition}
\subsection{Agent independent probabilistic feasibility guarantees for a unique solution} \label{agent_independent}
In many practical applications there are cases where a random constraint may leave a linear subspace unconstrained for any possible sample $\theta \in \Theta$. This observation motivated the concept of the support rank as introduced in \cite{MorariSupportRank}, which allows us to provide tighter probabilistic guarantees for the problem under study.
Let $y \in \mathbb{Y} \subseteq \mathbb{R}^{d}$ and consider the following semi-infinite optimization program 
\begin{align}
& \min\limits_{y \in \mathbb{Y}} c^Ty \nonumber \\
& \text{subject to} \  l(y,\theta)\leq 0, \ \forall \ \theta \in \Theta. \label{RandomConstraint}
\end{align}
Notice that the objective function is linear without loss of generality and in the opposite case an epigraphic reformulation could be introduced.
Denoting the collection of all the linear subspaces of $\mathbb{R}^d$ as $\mathcal{L}$, we consider all the linear subspaces $L \in\mathcal{L}$ that, under the presence of the random constraint (\ref{RandomConstraint}), remain unconstrained for any uncertainty realization $\theta \in \Theta$ and any point $y \in \mathbb{Y}$  , i.e., the set
\begin{align*}
&\mathcal{U}=\bigcap\limits_{\theta \in \Theta}\bigcap\limits_{y \in \mathbb{Y}} \{L \in \mathcal{L}: L \subset F(y,\theta)\}, \\
& \text{where} \  F(y, \theta)= \{ \xi \in \mathbb{R}^{d}: l(y+\xi,\theta)=l(y, \theta)\} .
\end{align*}
\begin{definition} (Support rank \cite{MorariSupportRank})  \label{SupportRank}\\
	The support rank $\rho \in \{0, \dots, d\}$ of a random constraint equals to the dimension of the problem $d$ minus the dimension of the maximal unconstrained linear subspace $L_{max}$, i.e, $\rho=d-\text{dim}(L_{max})$. By maximal unconstrained subspace we mean the unique maximal element $L_{max} \in \mathcal{U}$ for which $L \subseteq L_{max}$, for all $L \in \mathcal{U}$.
\end{definition} 
\par From the support rank definition (see Lemma 3.8 in \cite{MorariSupportRank}) we have that Helly's dimension can be upper bounded by the support rank instead of the dimension $d$ of the problem, which is a more conservative upper bound \cite{CampiGaratti2008}, $\zeta \leq \rho \in \{0, \dots, d\}$, where $\zeta$ denotes the support dimension, a notion similar to that of Helly's dimension extended for the case of multiple random constraints (see \cite{MorariSupportRank}). Keeping this relation in mind, our main goal is to obtain a bound for the support rank of the random constraint of problem $P^{'}$, thus improving the robustness certificates of its optimal solution.
The following proposition aims at finding an upper bound for the support rank $\rho$ of the random constraint  (\ref{constraint}), that is independent from the number of agents involved in the optimization program.

\begin{proposition} \label{theorem}
	Under Assumptions \ref{cost_function} and \ref{asscenario},  the support rank $\rho$ of the random constraint (\ref{constraint}) in $P^{'}$,  has an agent independent upper bound, and in particular, $\rho \leq n+1$.
\end{proposition}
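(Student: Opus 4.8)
The plan is to exploit the fact that the random constraint $h(x,\gamma,\theta)=\sigma(x)^\top A(\theta)\sigma(x)+\sigma(x)^\top b(\theta)-\gamma$ depends on the decision vector $(x,\gamma)\in\mathbb{R}^{nN+1}$ not through the individual strategies $x_i$ but only through the aggregate $\sigma(x)=\sum_{i\in\mathcal{N}}x_i\in\mathbb{R}^n$ together with the scalar $\gamma$. Since the support rank is $\rho=d-\dim(L_{max})$ with $d=nN+1$, it suffices to exhibit an explicit unconstrained linear subspace of dimension at least $n(N-1)$; the bound $\rho\le n+1$ then follows by elementary arithmetic, and crucially the displayed dimension count is independent of $N$.

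First I would introduce the linear map $T:\mathbb{R}^{nN+1}\to\mathbb{R}^{n+1}$ defined by $T(x,\gamma)=(\sigma(x),\gamma)$ and observe that $h$ factors through $T$, i.e., there is a function $\tilde h$ with $h(x,\gamma,\theta)=\tilde h(T(x,\gamma),\theta)$ for every $\theta\in\Theta$. Because $T$ is surjective as a map of ambient spaces (any $(\sigma,\gamma)$ is the image of, e.g., the point with $x_1=\sigma$, all other $x_i=0$, and that value of $\gamma$), the rank–nullity theorem gives $\dim(\ker T)=(nN+1)-(n+1)=n(N-1)$.

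Next I would show that $\ker T$ is an unconstrained subspace in the sense of the set $\mathcal{U}$ preceding Definition \ref{SupportRank}. For any $\xi\in\ker T$ and any point $y=(x,\gamma)\in\mathbb{Y}$, linearity of $T$ yields $T(y+\xi)=T(y)+T(\xi)=T(y)$, hence $h(y+\xi,\theta)=\tilde h(T(y+\xi),\theta)=\tilde h(T(y),\theta)=h(y,\theta)$ for every $\theta\in\Theta$. Thus $\xi\in F(y,\theta)$ for all $y$ and $\theta$, so $\ker T\subseteq F(y,\theta)$ for all such $y,\theta$, i.e., $\ker T\in\mathcal{U}$. By maximality of $L_{max}$ we then obtain $\ker T\subseteq L_{max}$ and therefore $\dim(L_{max})\ge n(N-1)$.

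Finally, substituting into Definition \ref{SupportRank} gives $\rho=d-\dim(L_{max})\le (nN+1)-n(N-1)=n+1$, the claimed agent-independent bound. The only delicate point is matching the elementary factorization argument to the precise definition of $\mathcal{U}$; in particular one must verify that a kernel direction leaves the \emph{value} of $h$ invariant simultaneously for every base point $y$ and every realization $\theta$, rather than merely preserving feasibility, after which the dimension bookkeeping via rank–nullity is routine. I would not expect the positive semi-definiteness of $A(\theta)$ or the uniqueness posited in Assumption \ref{asscenario} to enter this particular bound, since those hypotheses serve convexity and solvability of $P^{'}_{SC}$ rather than the structure of the maximal unconstrained subspace.
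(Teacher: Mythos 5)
Your proof is correct, and it reaches the same bound as the paper by a noticeably cleaner route. The paper's proof starts from the same observation—that the constraint only sees the aggregate—but executes it by expanding the quadratic identity $h(x+\xi,\gamma+\xi',\theta)=h(x,\gamma,\theta)$, rewriting it with Kronecker products as $V(x,\theta)w+w^TP(\theta)w=0$, and then taking as candidate unconstrained subspace the nullspace of the stacked matrix $Q(x,\gamma,\theta)=\bigl(\begin{smallmatrix}P(\theta)\\ V(x,\theta)\end{smallmatrix}\bigr)$, whose dimension is bounded below via $\mathrm{rank}(Q)\le n+1$. Your factorization $h(x,\gamma,\theta)=\tilde h(T(x,\gamma),\theta)$ with $T(x,\gamma)=(\sigma(x),\gamma)$ and a single application of rank--nullity to $T$ buys two things. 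First, it is more elementary: no Kronecker-product bookkeeping, no use of the symmetry of $A(\theta)$ (which the paper invokes to combine cross terms), and no rank computation for $P(\theta)$ (the paper's claim $\mathrm{rank}(P(\theta))=n$ actually presupposes $A(\theta)$ nonsingular, which Assumption 3 does not guarantee; this only affects tightness, not validity, but your argument never needs it). Second, and more substantively, your candidate subspace $\ker T$ is a single fixed subspace independent of $(x,\gamma,\theta)$, which is exactly what membership in $\mathcal{U}$ requires; the paper's $L(x,\gamma,\theta)$ nominally depends on the base point and the realization, so strictly one must still argue that a common subspace of dimension $n(N-1)$ sits inside the intersection over all $(x,\gamma,\theta)$ (it does—indeed it equals your $\ker T$—but the paper leaves this implicit). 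Your closing remarks are also accurate: neither the positive semi-definiteness of $A(\theta)$ nor Assumption \ref{asscenario} plays any role in the dimension count.
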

\begin{proof}
	The dimension of the problem under study is $d=nN+1$, due to the presence of the epigraphic variable. Let $\mathcal{L}$ be the collection of all linear subspaces in $ \mathbb{R}^{nN+1}$. We aim at finding the dimension of the subspace that remains unconstrained  for a scenario program subject to the random constraint $h(x,\gamma, \theta) \leq 0$ for any uncertain realisation $\theta \in \Theta$ and any decision vector $(x,\gamma) \in X \times \mathbb{R}$. We first define the collection of linear subspaces that are contained in all the sets $F(x,\gamma,\theta)$: 
	\begin{align*}
	\mathcal{U}=\bigcap_{\theta \in \Theta}\bigcap_{(x,\gamma) \in \mathbb{R}^{nN+1}} \{L \in \mathcal{L}: L \subset F(x,\gamma,\theta)\}, \text{where}
	\end{align*}
	\begin{align*}
	\ F(x,\gamma, \theta)=& \{ (\xi,\xi') \in \mathbb{R}^{nN+1}: \\ &h(x+\xi,\gamma+\xi',\theta)=h(x,\gamma, \theta)\} 
	\end{align*}
	In our case, $h(x+\xi,\gamma+\xi',\theta)=h(x,\gamma, \theta)$ yields:
	\begin{align*}
	\sigma(x+\xi)^T&(A(\theta)\sigma(x+\xi)+b(\theta))-(\gamma+\xi')=\\
	=\sigma(x)^T(&A(\theta)\sigma(x)+b(\theta))-\gamma,  \nonumber \\  
	\iff \sigma^T(x)A&(\theta)\sigma(\xi) \nonumber+\sigma^T(\xi)A(\theta)\sigma(x)+\\
	\sigma^T(\xi)A&(\theta)\sigma(\xi)+\sigma^T(\xi)b(\theta)-\xi'=0 \nonumber,  \\ 
	\iff \sigma^T(\xi)A&^T(\theta)\sigma(x) \nonumber+\sigma^T(\xi)(A(\theta)\sigma(x)+b(\theta))+\\
	\sigma^T(\xi)A&(\theta)\sigma(\xi)-\xi'=0,
	\end{align*} 
	where the first equivalence stems from the fact that $\sigma(x+\xi)$ is linear with respect to its arguments, and the second one after some algebraic rearrangement.
	Note that each of the terms above is scalar, which means that it is equal to its transpose for all  $x \in X$ and $\theta \in \Theta$, while by Assumption \ref{cost_function},  $A^T(\theta)=A(\theta)$ for any $\theta \in \Theta$. As such,
	\begin{align}
	&\sigma^T(\xi)(2A(\theta)\sigma(x)+b(\theta))+\sigma^T(\xi)A(\theta)\sigma(\xi)-\xi'=0. \label{equation2} 
	\end{align}
	Using the equalities 
	$\sigma^T(\xi)(2A(\theta)\sigma(x)+b(\theta))=(\mathds{1}_{1 \times N} \otimes (2A(\theta)\sigma(x)+b(\theta))^T)\xi$ and $\sigma^T(\xi)A(\theta)\sigma(\xi)=\xi^T (\mathds{1}_{N \times N} \otimes A(\theta))\xi$, where $\mathds{1}_{1\times N}$ denotes a row vector with all elements being equal to one and $\otimes$ denotes the Kronecker product, (\ref{equation2}) can be written in the following form:
	\begin{align}
	& (\mathds{1}_{1 \times N} \otimes (2 \sigma^T(x)A(\theta)+b^T(\theta)))\xi + \nonumber \\
	&\xi^T (\mathds{1}_{N \times N} \otimes A(\theta))\xi-\xi'=0, \label{new}
	\end{align}
	Let $\tilde{C}: X \times \Theta \rightarrow \mathbb{R}^{nN}$, $\tilde{A}: \Theta \rightarrow \mathbb{R}^{nN \times nN}$, where  $\tilde{C}(x,\theta) = \mathds{1}_{1 \times N} \otimes (2 \sigma^T(x)A(\theta)+b^T(\theta))$ and $\tilde{A}(\theta)=\mathds{1}_{N \times N} \otimes A(\theta)$, respectively. Then, equation (\ref{new}) can be written as:
	\begin{align*}
	&\tilde{C}(x,\theta)\xi+\xi^T\tilde{A}(\theta)\xi-\xi'=0, \nonumber \\
	\iff& \begin{pmatrix} \tilde{C}(x,\theta) & -1\end{pmatrix} \begin{pmatrix}\xi \\ \xi'\end{pmatrix} \\ +
	&\begin{pmatrix}\xi & \xi'\end{pmatrix}\begin{pmatrix}\tilde{A}(\theta) & 0_{nN \times 1} \\  0_{1 \times nN} & 0\end{pmatrix}\begin{pmatrix}\xi \\ \xi'\end{pmatrix}=0, \nonumber \\
	\iff& V(x,\theta)w+w^TP(\theta)w=0, 
	\end{align*}
	where, $V(x,\theta)=\begin{pmatrix}\tilde{C}(x,\theta) & -1\end{pmatrix}$, $P(\theta)=\begin{pmatrix}\tilde{A}(\theta) & 0_{nN \times 1} \\  0_{1 \times nN} & 0\end{pmatrix}$ and $w=\begin{pmatrix}\xi \\ \xi'\end{pmatrix}$. 
	We need to find an unconstrained linear subspace that is a subset of $F(x,\gamma, \theta)$. We define \\
	\begin{align*}
	L(x,\gamma, \theta)&=\{ w \in \mathbb{R}^{nN+1}: \begin{pmatrix} P(\theta)  \\  V(x,\theta) \end{pmatrix}w=0 \}.
	\end{align*}
	We can easily see that $L(x,\gamma,\theta) \subset F(x,\gamma, \theta)$. As such, the random constraint $h(x,\gamma,\theta) \leq 0$ cannot constrain any of the dimensions of $L(x,\gamma,\theta)$ (also denoted as $L$ for simplicity).
	Let $Q(x,\gamma,\theta)=\begin{pmatrix} P(\theta)  \\  V(x,\theta) \end{pmatrix}$. Then $L(x,\gamma,\theta)=\text{nullspace}(Q(x,\gamma, \theta))$ and from nullity-rank theorem \cite{axler} we have that $\text{dim}(L(x,\gamma,\theta))=nN+1-\text{rank}(Q(x, \gamma, \theta))$.
	Since $\text{rank}(P(\theta))=n$ and $\text{rank}(V(x,\theta))=1$, this means that $\text{rank}(Q(x, \gamma, \theta))=n+1$, which implies that $\text{dim}(L(x,\gamma,\theta))=nN+1-(n+1)$. Notice that the unconstrained subspace that we chose may not be the maximal one. This means that the support dimension is $\rho=nN+1-\text{dim}(L_{max}) \leq nN+1- \text{dim}(L)=n+1$, thus concluding the proof.
\end{proof}

An immediate consequence of  Proposition \ref{theorem} when combined with Theorem 4.1 of \cite{MorariSupportRank} is the following theorem.
\begin{thm} \label{corollary3}
	Let $(x^*,\gamma^*)$ denote the optimal solution of the scenario program $P^{'}_{SC}$. Under Assumptions \ref{cost_function} and \ref{asscenario} we have that
	\begin{align}
	\mathbb{P}^{M}\{\{\theta_m\}_{m \in  \mathcal{M}}&\in \Theta^M: \nonumber \\
	&\mathbb{P}( \theta \in \Theta: h(x^*,\gamma^*,  \theta)>0 ) > \epsilon\} \leq \beta, \label{probability}  \\
	&\text{where} \ \beta= \sum\limits_{j=0}^{n}{M \choose j}\epsilon^j(1-\epsilon)^{M-j} \label{beta}.
	\end{align}
\end{thm}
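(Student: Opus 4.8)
The plan is to derive the certificate as a direct corollary of the agent-independent support rank bound obtained in Proposition~\ref{theorem}, fed into the a priori scenario bound of \cite{MorariSupportRank}, which is expressed in terms of the support dimension rather than the full decision dimension $d=nN+1$. The whole argument is a matter of checking hypotheses and then exploiting monotonicity of the scenario bound in its effective dimension.

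First I would verify that $P^{'}_{SC}$ is admissible for Theorem 4.1 of \cite{MorariSupportRank}. Under Assumption~\ref{cost_function} the cost $J$ is convex, so after the epigraphic reformulation $P^{'}_{SC}$ is a convex program with convex constraints $h(x,\gamma,\theta_m)\leq 0$; Assumption~\ref{asscenario} supplies feasibility and uniqueness of the optimizer $(x^*,\gamma^*)$, which are exactly the structural ingredients the support-rank machinery requires (in the multiple-constraint case uniqueness lets one identify the support constraints of Definition~\ref{sample} and control their number).

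Next I would recall the chain $\zeta \leq \rho$ stated after Definition~\ref{SupportRank} (Lemma 3.8 of \cite{MorariSupportRank}), which bounds the support dimension $\zeta$ by the support rank $\rho$. Proposition~\ref{theorem} gives the agent-independent estimate $\rho \leq n+1$, whence $\zeta \leq \rho \leq n+1$. Applying Theorem 4.1 of \cite{MorariSupportRank} with $\zeta$ in place of $d$ then yields
\begin{align*}
&\mathbb{P}^{M}\{\{\theta_m\}_{m \in \mathcal{M}} \in \Theta^M: \\
&\quad \mathbb{P}(\theta \in \Theta: h(x^*,\gamma^*,\theta)>0) > \epsilon\} \\
&\quad \leq \sum_{j=0}^{\zeta-1}{M \choose j}\epsilon^j(1-\epsilon)^{M-j}.
\end{align*}
Since each summand ${M \choose j}\epsilon^j(1-\epsilon)^{M-j}$ is nonnegative, this partial sum is nondecreasing in its upper index, and because $\zeta-1 \leq n$ the right-hand side is in turn bounded by $\sum_{j=0}^{n}{M \choose j}\epsilon^j(1-\epsilon)^{M-j}=\beta$, which is precisely the claimed bound in \eqref{probability}--\eqref{beta}.

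The hard part will not be the algebra but the hypothesis-checking: one must be careful that convexity of $h$ together with uniqueness of the optimizer (Assumption~\ref{asscenario}.2) really place $P^{'}_{SC}$ inside the non-degenerate, fully-supported regime for which Theorem 4.1 of \cite{MorariSupportRank} is proved, and one must justify that overwriting the exact (and generally unknown) support dimension $\zeta$ by its agent-independent over-estimate $n+1$ can only enlarge, never shrink, the right-hand side. This monotonicity is what guarantees that the resulting certificate remains valid while becoming independent of the number of agents $N$.
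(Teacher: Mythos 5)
Your proposal is correct and follows essentially the same route as the paper, which likewise obtains the theorem as a direct application of Theorem 4.1 of \cite{MorariSupportRank} (equivalently Theorem 2.4 of \cite{CampiGaratti2008}) with the number of support constraints replaced by the agent-independent support-rank bound $\rho\leq n+1$ from Proposition~\ref{theorem}. The only addition on your side is the explicit remark that the partial binomial sum is nondecreasing in its upper index, which the paper leaves implicit but which is exactly the right justification for replacing the unknown $\zeta$ by its over-estimate $n+1$.
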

The bound obtained from Theorem \ref{corollary3} constitutes a major improvement for this class of problems, since, irrespective of the number of agents $N$, the same number of samples $M$ is required to provide identical robustness certificates given a local decision vector of size $n$. The proof of this theorem, is a direct application of the scenario approach theory (see Theorem 2.4 in \cite{CampiGaratti2008}, and Theorem 4.1 in \cite{MorariSupportRank}, where the number of support constraints is replaced by the obtained bound for the support rank, namely, $n+1$). Note that in the absence of Proposition 1, a direct application of the scenario approach theory \cite{CampiGaratti2008} to the problem under consideration would still result in (\ref{probability}), however, (\ref{beta}) would be replaced by
\begin{align}
\beta= \sum\limits_{j=0}^{nN}{M \choose j}\epsilon^j(1-\epsilon)^{M-j} \label{beta2},
\end{align} 
where the dependence of the guarantees on the number of agents $N$ is apparent.

Corollary \ref{corollary1} establishes a link between Theorem \ref{corollary3} and the initial program under study $P$ by providing probabilistic performance guarantees for the optimal solution. Specifically, it quantifies, in an \emph{a priori} fashion, the probability that the cost that corresponds to the optimal value $x^*$ of $P_{SC}'$ will deteriorate, when a new sample $\theta \in \Theta$ is encountered.  To formalise this,  with a slight abuse of notation let $J(x)=J(x,\{\theta_m\}_{m \in  \mathcal{M}})$ be the cost function of the corresponding scenario program of program $P$ and $J^{+}(x)=J(x,\{\theta_m\}_{m \in  \mathcal{M}}\cup\{\theta\})$ the cost defined over $M+1$ scenarios by taking into account the new sample $\theta$. 
\begin{cor} \label{corollary1} 
	Under Assumptions \ref{cost_function} and \ref{asscenario} we have that 
	\begin{align}
	\ \mathbb{P}^{M}\{&\{\theta_m\}_{m \in \mathcal{M}}\in \Theta^M:  \nonumber \\
	& \ \ \ \ \ \ \ \ \mathbb{P}( \theta \in \Theta: J^+(x^*)> J(x^*))> \epsilon\} \leq \beta, \\ 
	&\text{where} \ \beta= \sum\limits_{j=0}^{n}{M \choose j}\epsilon^j(1-\epsilon)^{M-j} 
	\end{align}
\end{cor}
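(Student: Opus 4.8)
The plan is to show that the cost-deterioration event in the corollary is \emph{identical}, for every fixed multisample, to the feasibility-violation event already controlled by Theorem~\ref{corollary3}. Once this identity is established, the statement follows verbatim from that theorem, so the whole argument reduces to a book-keeping reformulation of the epigraphic recasting.

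First I would pin down the optimal epigraphic variable. Since $P'_{SC}$ minimizes $f(x)+\gamma$ over $(x,\gamma)$ subject to $g(x,\theta_m)\le\gamma$ for all $m\in\mathcal{M}$, and $f$ does not involve $\gamma$, at the optimizer $(x^*,\gamma^*)$ the variable $\gamma$ must be pushed to its smallest feasible value, giving $\gamma^*=\max_{m\in\mathcal{M}}g(x^*,\theta_m)$. Hence the scenario cost of the original program evaluates to $J(x^*)=J(x^*,\{\theta_m\}_{m\in\mathcal{M}})=f(x^*)+\gamma^*$. Uniqueness of $(x^*,\gamma^*)$ from Assumption~\ref{asscenario} guarantees that both $x^*$ and $\gamma^*$ are well-defined functions of the multisample, so no tie-breaking ambiguity enters later.

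Next I would compute the augmented cost. By definition $J^+(x^*)=J(x^*,\{\theta_m\}_{m\in\mathcal{M}}\cup\{\theta\})=f(x^*)+\max\bigl(\gamma^*,\,g(x^*,\theta)\bigr)$, using that appending one scenario only enters the inner maximum. Comparing the two costs, $J^+(x^*)>J(x^*)$ holds if and only if $\max(\gamma^*,g(x^*,\theta))>\gamma^*$, i.e.\ if and only if $g(x^*,\theta)>\gamma^*$, which is precisely $h(x^*,\gamma^*,\theta)=g(x^*,\theta)-\gamma^*>0$. Thus the two events
\[
\{\theta\in\Theta:\ J^+(x^*)>J(x^*)\}\quad\text{and}\quad\{\theta\in\Theta:\ h(x^*,\gamma^*,\theta)>0\}
\]
coincide for every fixed multisample, so their $\mathbb{P}$-measures are equal, and consequently the associated $\mathbb{P}^M$-measures of the bad multisamples in the two statements are equal as well.

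Finally, invoking Theorem~\ref{corollary3} bounds this $\mathbb{P}^M$-measure by $\beta=\sum_{j=0}^{n}\binom{M}{j}\epsilon^j(1-\epsilon)^{M-j}$, which is exactly the claimed bound. I expect no genuine obstacle: the only points requiring care are justifying $\gamma^*=\max_{m}g(x^*,\theta_m)$ at optimality (so that the scenario cost and the epigraphic value agree) and the monotone behaviour of the pointwise maximum when a single scenario is appended. Both are immediate under Assumptions~\ref{cost_function} and \ref{asscenario}, so the corollary is essentially a restatement of Theorem~\ref{corollary3} in terms of the original cost $J$.
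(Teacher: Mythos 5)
Your proposal is correct and follows essentially the same route as the paper: both arguments identify $\gamma^*=\max_{m\in\mathcal{M}}g(x^*,\theta_m)$ at optimality, show that the event $\{\theta\in\Theta:\ J^+(x^*)>J(x^*)\}$ coincides with $\{\theta\in\Theta:\ h(x^*,\gamma^*,\theta)>0\}$, and then substitute into Theorem~\ref{corollary3}. Your version is, if anything, slightly more explicit in spelling out $J(x^*)=f(x^*)+\gamma^*$ and $J^+(x^*)=f(x^*)+\max(\gamma^*,g(x^*,\theta))$, but the substance is identical.
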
 
\begin{proof} 
	Let $(x^*,\gamma^*)$ be the optimal solution of program $P^{'}$, which implies that $\gamma^*=\max\limits_{m \in \mathcal{M}}g(x^*, \theta_m)$. As such,   
	\begin{align}
	\mathbb{P}( \theta \in \Theta: & \ h(x^*,\gamma^*, \theta)>0)= \mathbb{P}( \theta \in \Theta: g(x^*, \theta)>\gamma^*)= \nonumber \\
	\mathbb{P}( \theta \in \Theta: & \ g(x^*, \theta)>\max\limits_{m \in \mathcal{M}}g(x^*, \theta_m))= \nonumber  \\
	\mathbb{P}( \theta \in \Theta: & \  \max \{g(x^*, \theta), \max\limits_{m \in \mathcal{M}}g(x^*, \theta_m)\}>\max\limits_{m \in \mathcal{M}}g(x^*, \theta_m))=\nonumber  \\
	\mathbb{P}( \theta \in \Theta: & \ J^{+}(x^*)> J(x^*)) \label{cost},
	\end{align}
	where the second equality follows from the fact that $\gamma^*=\max\limits_{m \in \mathcal{M}}g(x^*, \theta_m)$, and the last one from the definitions of $J$ and $J^+$. Direct substitution of  (\ref{cost}) in (\ref{probability}) of Theorem \ref{corollary3} concludes then the proof..
\end{proof}
\section{Numerical Study} \label{EVcharging}
\subsection{Probabilistic guarantees for all feasible electric vehicle charging schedules}
In the following set-up, a cooperative scheme is considered, where  agents-vehicles  minimize a common electricity cost, while their charging schedules are subject to constraints.
However, most of the work up to this point assumed that these constraints are purely deterministic \cite{Callaway1}, \cite{Dario2019}, \cite{deori2018a}. We extend this framework by imposing uncertainty on the constraints by considering the following program
\begin{align}
&\min_{x \in \mathbb{R}^{nN}} J(x)  \ \text{subject to} \nonumber \\
&x \in  \bigcap\limits_{\theta \in \Theta}\prod_{i \in \mathcal{N}} \{x_i \in [\underline{x}_i (\theta),\overline{x}_i(\theta)]: \sum_{t=1}^{n}x_i^{(t)} \geq E_i(\theta)  \} .  \label{evgames}
\end{align} 
The two main requirements for the operation of the system under study, namely, the lower and upper bounds imposed on the power rate of each vehicle and the total energy level to be achieved at the end of charging, can be modelled as constraints of affine form.
The variables $x_i=(x_i^{(t)})_{t=1}^n$ denote the charging schedule for all time instances $t \in \{1, \dots, n\}$.
\par
The corresponding scenario program of (\ref{evgames}) is given by
\begin{align}
&\min_{x \in \mathbb{R}^{nN}} J(x)  \ \text{subject to} \nonumber \\
&x \in  \bigcap\limits_{m \in \mathcal{M}}\prod_{i \in \mathcal{N}} \{x_i \in [\underline{x}_i (\theta_m),\overline{x}_i(\theta_m)]: \sum_{t=1}^{n}x_i^{(t)} \geq E_i(\theta_m) \}.  \label{ev}
\end{align} 
The cost function  $J$ is allowed to have any arbitrary form and can even be affected by uncertainty.
In our set-up  we assume that the upper and lower bounds of the charging rate, $(\overline{x}_i(\theta_u))_{i \in \mathcal{N}}$, $  (\underline{x}_i(\theta_l))_{i \in \mathcal{N}}\in \mathbb{R}^{nN}$ are affected by the uncertain parameters  $\theta_{u}, \theta_{l} \in \mathbb{R}^{nN}$, respectively, with uncertainty representing volatile grid power restrictions. Each of the parameters' elements is extracted according to the same probability distribution $\mathcal{N}(0,0.5)$, where $\mathcal{N}(\mu,\sigma)$ is a Gaussian distribution with mean $\mu$ and standard deviation $\sigma$. The distribution is truncated by a prespecified quantity to avoid infeasibility  issues. 
We further assume that the uncertainty is additive, i.e., 
$\overline{x}(\theta_u)=\overline{x}^{nom}+\theta_{u}$ and $\underline{x}(\theta_l)=\underline{x}^{nom}+\theta_{l}$, where each element of $\overline{x}^{nom}$ is drawn from a uniform probability distribution with support $[10,20]$ kW  and $\underline{x}^{nom}$ is set to $2$ kW. Finally,  the energy capacity of the battery can be affected by a variety of factors such as battery aging and lithium plating for Li-ion batteries. These phenomena can have an important effect on the amount of energy required by each vehicle to fully charge, thus imposing uncertainty on the final energy level to be achieved by each of them by the end of the charging cycle. In our set-up, the uncertainty in the total energy $E=(E_i)_{i \in \mathcal{N}}$ (in kWh) of each vehicle at the end of charging is yet again assumed to be additive, i.e., $E=E^{nom}+\theta_e$, where $\theta_e  \in \mathbb{R}^{N}$ and its elements are extracted according to the probability $\mathcal{N}(0,1)$ and $E_i^{nom} \in \mathbb{R}$ is the nominal final energy demand of each agent $i \in\mathcal{N}$. The uncertainty vector is given by $\theta=[\theta_{u}, \theta_{l}, \theta_{e}]\in \mathbb{R}^{N(2n+1)}$. \par

\begin{figure} 
	\centering
	\includegraphics[scale=0.7]{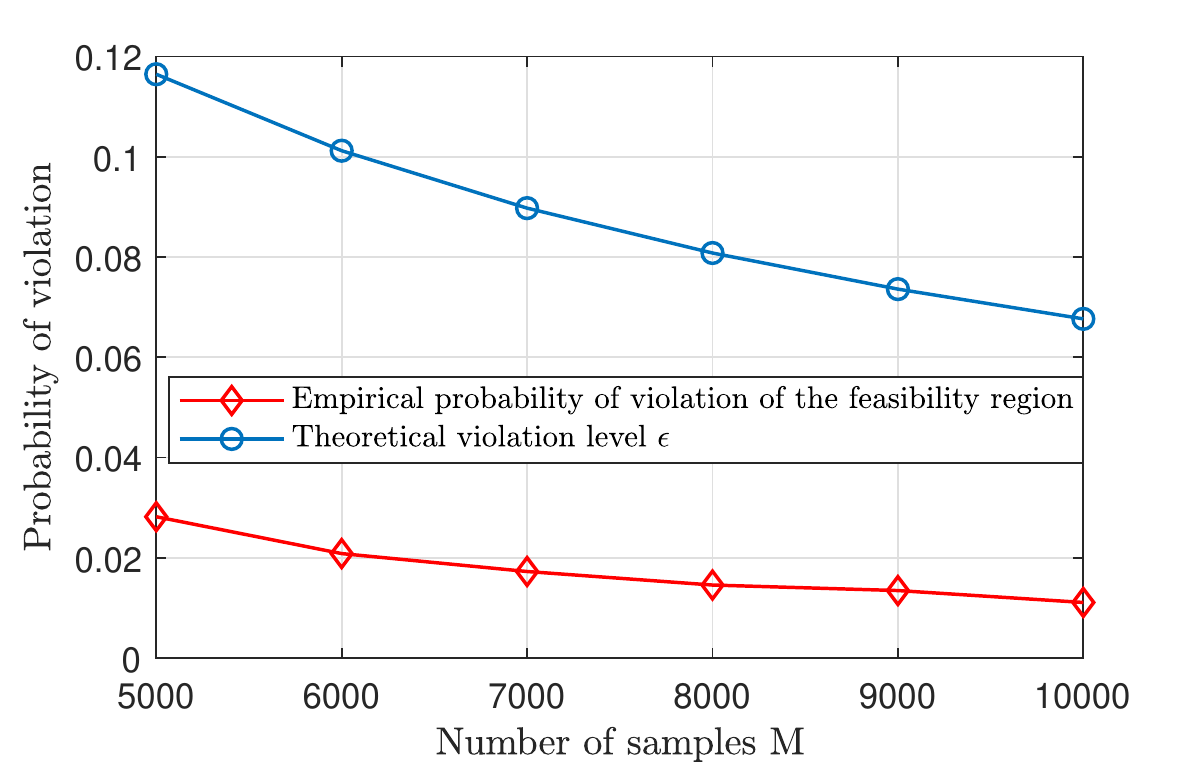}
	\caption{Empirical probability of violation of the feasibility region (red line) versus the the theoretical violation level for a different number of samples $M=\{5000,6000,\dots, 10000\}$. To calculate the probability of violation a total of $M_{test}=40000$ samples is used.} \label{feasible22}
\end{figure}
Considering $N=5$ vehicles and $n=12$ time slots we construct the feasibility region of the corresponding scenario program (\ref{ev}).
Using $M_{test}=40000$ test samples we empirically compute the  probability  that a new yet unseen constraint will be violated by at least one element of the feasible region  and compare it with the theoretical violation level $\epsilon(k)$ from Theorem \ref{Theorem1}. The results are shown in Figure 2, where the red line corresponds to the empirical probability and the blue line corresponds to the theoretical bound. Note that an upper bound for the theoretical violation level $\epsilon(k)$ can be obtained by counting the number of the facets $k=F_M$ of the feasible set or by leveraging the geometry of our numerical example to provide an upper bound for $F_N$, that is $F_N \leq 2nN+N$. This bound can be easily derived noticing that the feasible region is in fact a cartesian product of $N$ rectangles intersected by a halfspace that corresponds to the energy constraint. As such the worst case number of facets for the entire polytope in our example is $N(2n+1)$.   \par 
In general, for samples that give rise to more than one affine constraints, the number of facets constitutes only an upper bound for the cardinality of the minimal support subsample. This bound is tight only in the case when there is a one to one correspondence between a sample realization and a scalar-valued constraint. This implies that the guarantees for the feasible subset of the problem under study can be significantly tighter. 
The reason behind the use of the looser bound $k=2nN+N$ in our example lies in the fact that its quantification is straightforward and the use of a support subsample function is thus not required. In other cases, however, where an upper bound for $k$ is absent, the methodologies for the detection of redundant affine constraints provided by \cite{May} and references therein can be used as minimal support subsample functions. \par
\begin{figure}[h]
	\centering
	\includegraphics[scale=0.7]{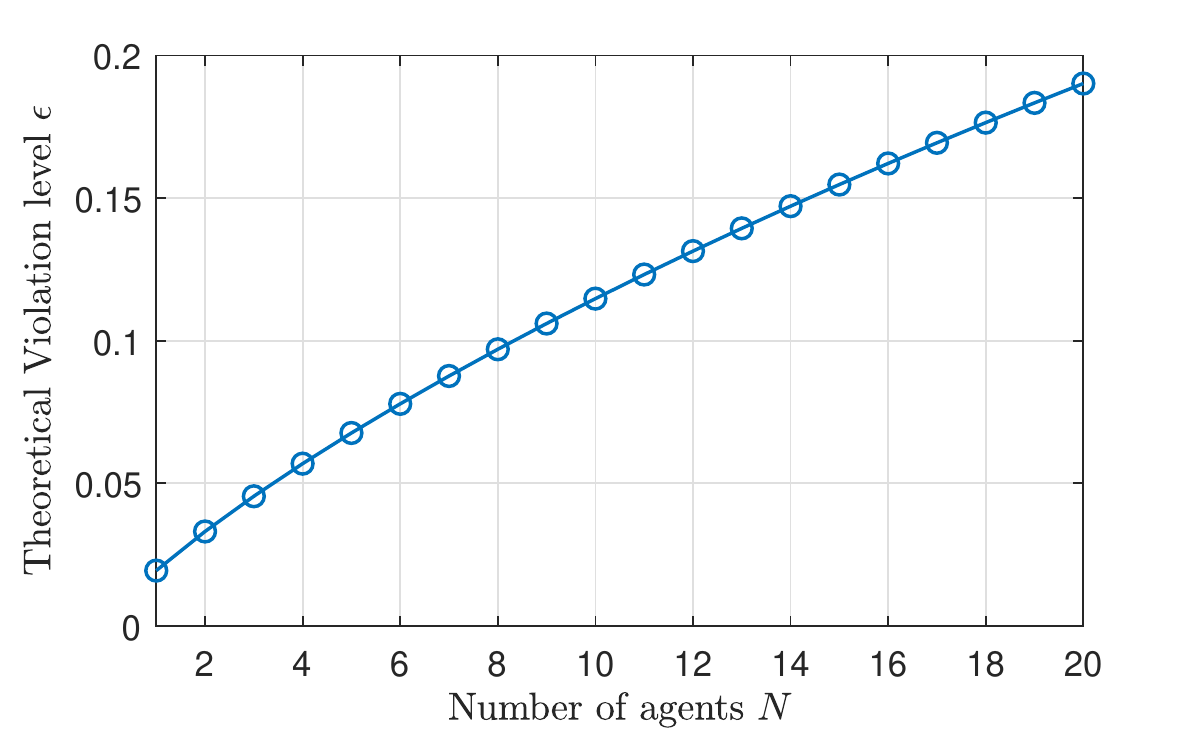}
	\caption{The violation level $\epsilon$ with respect to the number of agents $N$ for $\beta=10^{-6}$ and a multi-sample of size $M=10000$. Note how the violation level for a fixed number of samples is agent dependent, i.e., the guarantees become more conservative as the number of agents increases.} \label{feasible22}
\end{figure}
Figure \ref{feasible22} illustrates the violation level $\epsilon(k)$ with respect to the number of agents $N$ for a fixed confidence level $1-\beta$, a fixed multi-sample of size $M=10000$ and $k^*=2Nn+N$. It is clear that for the same number of samples the probabilistic guarantees provided by Theorem \ref{Theorem1} for all feasible solutions become more conservative as the number of vehicles in the fleet increases. This means that more samples are required to provide the same probabilistic guarantees as the dimension grows.

\subsection{Agent independent robustness certificates for the optimal electric vehicle charging profile}

In this set-up  the cost to be minimized is influenced by the electricity price, which in turn is considered to be a random variable affected by uncertainty. Uncertainty here refers to price volatility. All electric vehicles cooperate with each other choosing their charging schedules so as to minimize the total uncertain electricity cost, while satisfying their own deterministic constraints. To this end, we consider the following uncertain electric vehicle charging problem
\begin{align}
P_{EV}:& \min\limits_{x \in \mathbb{R}^{nN}} f(x)+ \max\limits_{\theta \in \Theta}g(x, \theta), \nonumber \\
& \text{subject to} \ x_i \in [\underline{x}_i, \overline{x}_i], \sum\limits_{t=1}^{n}x_i^{(n)} \geq E_i \ \text{for all} \  i \in \mathcal{N},
\end{align}
where  $f(x)=\sum\limits_{i \in \mathcal{N}} f_i(x_i,x_{-i})=\sigma(x)^Tp_0(\sigma(x))$ is the deterministic part of the electricity cost that depends on a nominal electricity price $p_0(\sigma(x))=A_0\sigma(x)+b_0$ that is, in turn, a function of the aggregate consumption of the vehicles. $g(x, \theta)=\sum\limits_{i \in \mathcal{N}}g_i((x_i,x_{-i}, \theta)= \sigma(x)^Tp(\sigma(x), \theta)$ constitutes the uncertain part of the electricity cost, where the price $p(\sigma(x), \theta)=A(\theta)\sigma(x)+b(\theta)$ is additionally affected by the uncertain parameter $\theta$ extracted from the support set $\Theta$ according to a probability distribution $\mathbb{P}$, where $\Theta$ and $\mathbb{P}$ are considered unknown. The elements of $A_0 \in \mathbb{R}^{n \times n}$ and $b_0 \in \mathbb{R}^n$ are deterministic with $A_0$ being a symmetric positive semi-definite matrix, while the uncertain mappings $A: \Theta \rightarrow \mathbb{R}^{n \times n} $ and  $b: \Theta \rightarrow \mathbb{R}^{n} $ are defined as in Section III. The vectors $\underline{x}_i, \overline{x}_i \in \mathbb{R}^{n}$ constitute the lower and upper bound of the charging rate of vehicle $i \in \mathcal{N}$, respectively, while $E_i \in \mathbb{R}$ is the final energy to be achieved by each vehicle $i \in \mathcal{N}$ by the end of the charging cycle. \par
Following the same lines as in Section III, we apply an epigraphic reformulation and use samples for $\Theta$ to obtain the following scenario program
\begin{align}
P_{EV}^{sc}:& \min\limits_{(x,\gamma) \in \mathbb{R}^{nN+1}} f(x)+ \gamma, \\
\text{subject to}& \ x_i \in [\underline{x}_i, \overline{x}_i], \sum\limits_{t=1}^{n}x_i^{(n)} \geq E_i, \ \text{for all} \  i \in \mathcal{N}, \nonumber \\
& g(x, \theta_m) \leq \gamma, \ \text{for all} \ m \in \mathcal{M}. \nonumber
\end{align} 
In our set-up $A(\theta) \in \mathbb{R}^{n \times n}$ is assumed to be a diagonal matrix with non-negative diagonal elements for any uncertain realization $\theta \in \Theta$. The diagonal elements  of $A(\theta)$ and the elements of $b(\theta) \in \mathbb{R}^{n}$ are extracted according to uniform distributions. For each agent $i \in \mathcal{N}$ the upper bound $\overline{x}_i$ takes a random value in the set $[6,15]$ kW, the lower bound $\underline{x}_i$ is set to 2 kW and the final energy to be achieved by the end of  the charging cycle is appropriately chosen to be feasible, considering the number of timesteps $n$ and the upper bound
of the power rate of each agent. $A_0 \in \mathbb{R}^{n \times n}$ is assumed to be a diagonal matrix, whose diagonal entries are all set to $0.01$ and $b_0$ is derived by rescaling a winter weekday demand profile in the UK. \par
\begin{figure}
	\centering
	\includegraphics[scale=0.7]{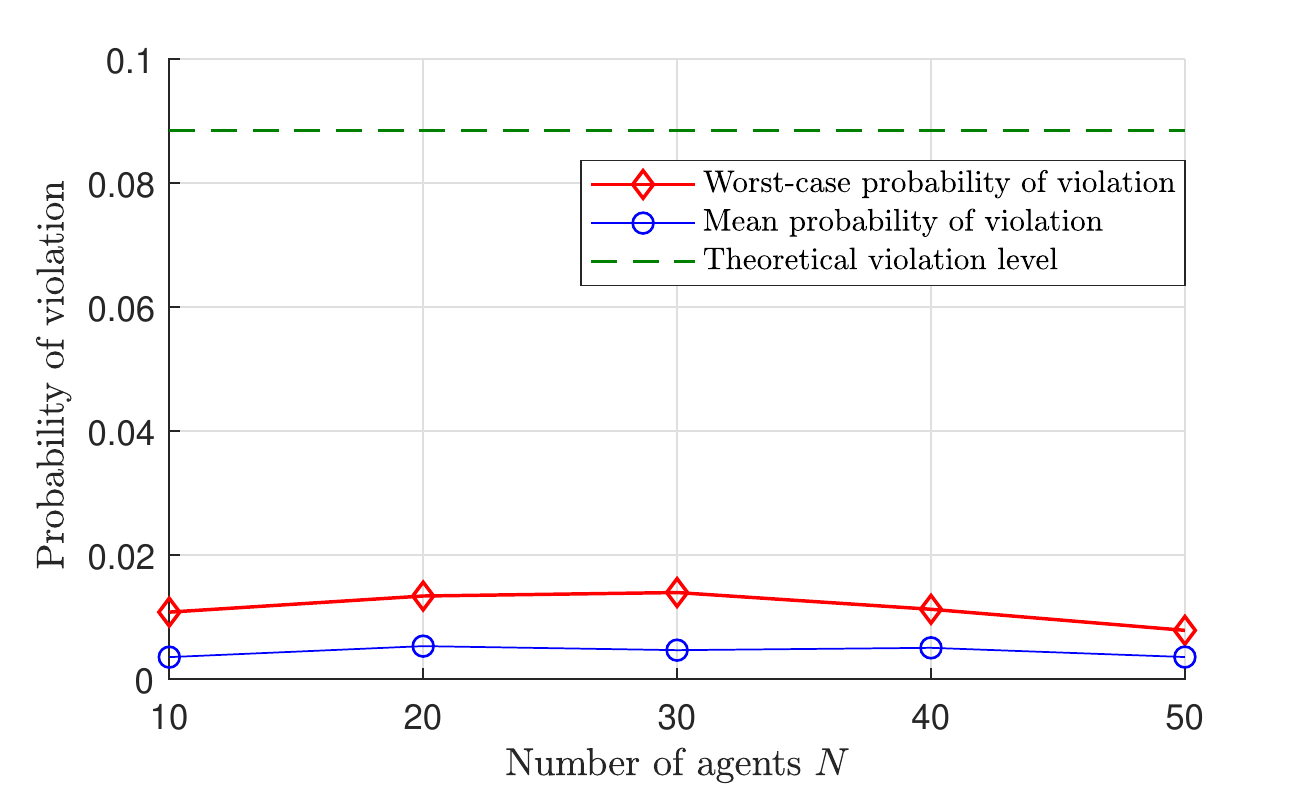}
	\caption{Mean and worst-case empirical probability of violation of the optimal solution with respect to the number of agents versus the theoretical violation level $\epsilon = 0.0885$. The number of samples used is M = 500 and $\beta = 10^{-6}$. By drawing a different multi-sample for each choice of the number of agents $N = \{10, 20, 30, 40, 50 \}$, we solve the corresponding
		scenario program for a fixed number of time slots $n = 12$. We then repeat this process 20 times (note that the multi-sample used for each repetition is also different) and compute the empirical probability of violation of the obtained optimal solutions, using $M_{test} = 100000$ test samples.}
\end{figure}
\begin{figure}
	\centering
	\includegraphics[scale=0.7]{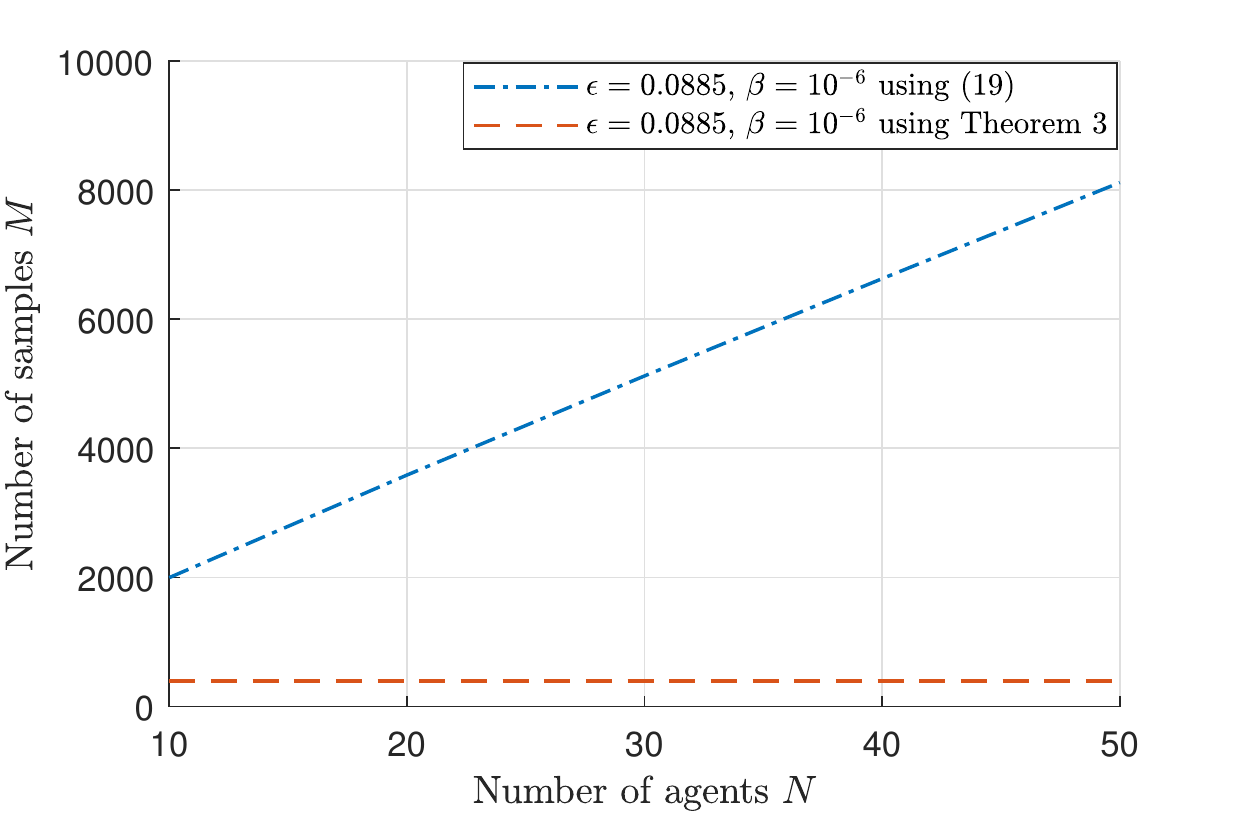}
	\caption{The number of samples required with respect to the number of agents $N = \{10,\dots,50\}$ using the results of Theorem 3 versus the one that would have been obtained if (19) is used instead. We consider a charging cycle of duration $n = 12$. The red line corresponds to Theorem 3, while the blue line corresponds to (19).}
\end{figure}
Note that our results can be used alongside any optimization algorithm irrespective of its nature, i.e.,  centralised, decentralised or distributed; here we solved the problem in a centralised fashion.
% We fix the number of time-steps to $n=12$, and solve the scenario program for different values of the number of agents $N \in \{10,20,\dots, 50 \}$. 
The number of samples we use for each problem is $M=500$. By fixing $\beta=10^{-6}$ and using the bound 
\begin{align}
\epsilon= \frac{2}{M}(\text{ln}\frac{1}{\beta}+n\text{ln}2),
\end{align}
which is a sufficient condition (see \cite[(p.42)]{IntroScenarioApproach}) for satisfaction of (\ref{beta}), we obtain the theoretical violation level $\epsilon=0.0885$. Note that the dimension we use to provide probabilistic guarantees for the optimal solution is set, in accordance to Theorem 3 to $n+1$ instead of $nN+1$, which circumvents the computational issues related to the rapid surge in dimension due to the multiplication of the number of agents with  the number of time slots. \par
By drawing a different multi-sample for each choice of
the number of agents $N \in \{10,20,30,40,50\}$ we solve the corresponding
scenario program for a fixed number of time slots
$n = 12$. We then repeat this process 20 times (note that the
multi-sample used for each repetition is also different) and
compute the empirical probability of violation of the obtained
optimal solutions, using $M_{test} = 100000$ test samples each
time. The mean and worst-case empirical probability of violation is depicted
in Figure 4 in comparison with the theoretical violation level
$\epsilon$.  The empirical values are always below the theoretical
level of violation, which is constant with the number of
agents due to the agent independent nature of our Theorem 3. In addition, the trend in Figure 4 shows, as expected
by Theorem 3, that the number of agents does not affect the
empirical probability of violation. \par
This result highlights the fact that, for fixed number of time periods $n$,  the number of samples $M$ required to provide identical probabilistic guarantees, as the size of the fleet of electric vehicles increases, remains constant.  This is illustrated in Figure 5, where we show the number of
samples required (for $\epsilon = 0.0885$, $\beta = 10^{-6}$ and $n=12$) using the
results of Theorem 3 versus the number of samples needed
to provide the same robustness certificates using the classic
results in scenario approach for a different number of agents
$N = 10,\dots,50$. The red line corresponds to Theorem 3 and shows the agent independent nature of our guarantees, while the blue line corresponds to the conservative agent dependent result of (19).

\section{Concluding remarks}
We first considered a general class of optimization programs with an arbitrary cost function and uncertain convex constraints and provided \emph{a posteriori} bounds for the probability of violation of all feasible solutions. We then focused on a different class of multi-agent programs that involved an uncertain aggregative term and deterministic constraints. For such problems we provided agent independent probabilistic guarantees for the optimal solution in an \emph{a priori} fashion. \par
Effort is being made towards extending our results to provide agent independent probabilistic guarantees in a non-cooperative set-up, giving rise to aggregative games. In addition, we aim at generalising the methodology of our first contribution to provide in an \emph{a posteriori}  fashion tighter robustness certificates that depend only on a subset of the feasible region, which circumscribes the game solutions. 

%\section*{References}

\bibliography{biblio_new}

\end{document}